\newtheorem{Theorem}{Theorem}[section]
\newtheorem{Lemma}[Theorem]{Lemma}
\newtheorem{Proposition}[Theorem]{Proposition}
\newtheorem{Corollary}[Theorem]{Corollary}
\newtheorem{Example}[Theorem]{Example}
\newtheorem{Remark}[Theorem]{Remark}
\newtheorem{Definition}[Theorem]{Definition}
\newtheorem{Notation}[Theorem]{Notation}
\newtheorem{Construction}[Theorem]{Construction}
\newtheorem*{Theorem A}{Theorem A}
\newcommand*{\overbar}[1]{\mkern 1.5mu\overline{\mkern-1.5mu#1\mkern-1.5mu}\mkern 1.5mu}
\begin{document}

\author{Charlie Beil}
\address{Institut f\"ur Mathematik und Wissenschaftliches Rechnen, Universit\"at Graz, Heinrichstrasse 36, 8010 Graz, Austria.}
 \email{charles.beil@uni-graz.at}
 \title[Cyclic contractions of dimer algebras always exist]{Cyclic contractions of dimer algebras\\ always exist}
 \keywords{Dimer algebra, dimer model, noncommutative algebraic geometry, non-noetherian ring.}
 \subjclass[2010]{13C15, 14A20}
 \date{}

\begin{abstract}
We show that every nondegenerate dimer algebra $A$ on a torus admits a cyclic contraction to a cancellative dimer algebra.
This implies, for example, that $A$ is Calabi-Yau if and only if it is noetherian; and that the center of $A$ has Krull dimension $3$.
\end{abstract}

\maketitle

\section{Introduction}

The main objective of this article is to show that every nondegenerate dimer algebra on a torus admits a cyclic contraction to a cancellative (i.e., consistent) dimer algebra.
Dimer algebras were introduced in string theory \cite{HK,FHMSVW,FHVWK}, and have found wide application to many areas of mathematics, such as noncommutative resolutions \cite{B4,B6,Bo2,Br,IN}, the McKay correspondence \cite{CBQ,IU}, cluster algebras and categories \cite{BKM,GK}, number theory \cite{BGH}, and mirror symmetry \cite{Bo,FHKV,FU}.

A dimer algebra $A = kQ/I$ is a quiver algebra whose quiver $Q$ embeds into a compact surface, with relations $I$ defined by a potential (see Definition \ref{dimer def}); in this article we will assume that the surface is a torus.
A dimer algebra is said to be nondegenerate if each arrow is contained in a perfect matching.

Let $A = kQ/I$ and $A' = kQ'/I'$ be nondegenerate dimer algebras, and suppose $Q'$ is obtained from $Q$ by contracting a set of arrows $Q_1^* \subset Q_1$ to vertices.
This operation defines a $k$-linear map of path algebras
$$\psi: kQ \to kQ'.$$
If $\psi(I) \subseteq I'$, then $\psi$ induces a $k$-linear map of dimer algebras, called a \textit{contraction},
$$\psi: A \to A'.$$
If, in addition, $A'$ is cancellative and $\psi$ preserves the so-called cycle algebra, then $\psi$ is called a \textit{cyclic contraction}.
An example of a cyclic contraction is given in Figure \ref{example}.
Cyclic contractions were introduced in \cite{B1}, and have been an essential tool in the study of non-cancellative dimer algebras.
Our main result is the following.

\begin{Theorem} \label{main}
Every nondegenerate dimer algebra on a torus admits a cyclic contraction.
\end{Theorem}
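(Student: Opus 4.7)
The plan is to construct the cyclic contraction explicitly from the perfect matching combinatorics of $A$. Since $A$ is nondegenerate, every arrow lies in at least one perfect matching. Lifting perfect matchings to the universal cover $\mathbb{R}^2$ of the torus attaches to each matching $D$ a homology class in $\mathbb{Z}^2$, and the convex hull $N$ of these classes is a polygon whose vertices correspond to the ``extremal'' perfect matchings. The cycle algebra is essentially encoded by the monomials supported on extremal matchings, so preserving it amounts to preserving $N$; any candidate set of arrows to contract must therefore leave $N$ invariant.

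First I would identify a candidate set $Q_1^{*} \subset Q_1$. A natural choice consists of arrows that are \emph{combinatorially redundant} in the following sense: they lie in every extremal perfect matching (or satisfy an equivalent local condition), so that after contraction no vertex of $N$ is lost and no new one is introduced. Next I would verify that the induced $k$-linear map $\psi : kQ \to kQ'$ descends to dimer algebras, i.e.\ $\psi(I) \subseteq I'$; this should follow formally from the fact that arrows in $Q_1^{*}$ do not play an essential role in the potential relations. Third, I would show that the resulting $A' = kQ'/I'$ is cancellative, by verifying the standard zig-zag consistency criterion on $Q'$: zig-zag paths lift to non-self-intersecting lines in the cover and no two homologous zig-zag paths intersect. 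Finally, preservation of the cycle algebra should reduce to checking that the extremal perfect matchings of $A$ descend, via $\psi$, to perfect matchings of $A'$ with the same homology classes, so that the two cycle algebras are generated by the same data.

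The step I expect to be the main obstacle is establishing cancellativity of $A'$. One has to show that the chosen contraction simultaneously removes every failure of consistency in the zig-zag structure of $Q$ while not over-contracting (which would collapse $N$ and destroy the cycle algebra). This is where the full strength of the nondegeneracy hypothesis should enter: because every arrow is hit by some perfect matching, the failures of cancellativity are ``localized'' in zig-zag intersection patterns that admit a controlled resolution by contracting a carefully chosen subset of arrows. I expect the argument to proceed either by induction on a complexity measure of $Q$ (for instance, the number of self-intersections and homologous intersections of zig-zag paths), or by giving a direct combinatorial description of $Q_1^{*}$ in terms of perfect matchings and then verifying the zig-zag consistency of $Q'$ locally at each vertex.
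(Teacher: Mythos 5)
Your outline has the right general shape (choose a set of arrows, contract, check $\psi(I)\subseteq I'$, prove cancellativity, prove $S=S'$), but both of the load-bearing steps are left open, and the one concrete choice you do make is the wrong one. First, the candidate set: you propose to fix, once and for all in $Q$, a set $Q_1^*$ of ``combinatorially redundant'' arrows characterized by extremal perfect matchings, i.e.\ by preservation of the matching polygon $N$. Preserving $N$ is strictly weaker than preserving the cycle algebra $S$ (the cycle algebra sees all cycles, not just the homology classes realized by extremal matchings), and more importantly the set cannot be chosen all at once from the matching data of $Q$: Example \ref{Example4} in the paper exhibits two arrows $a,b$, each individually removable, such that contracting either one yields a cyclic contraction but contracting both yields a cancellative $A'$ with $S\subsetneq S'$. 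The paper's construction avoids this by contracting a \emph{single} arrow at a time and re-deciding, in each intermediate quiver $Q^j$, which arrows are ``nonrigid'' --- an arrow is nonrigid when every perfect matching containing it is equivalent (same counting function $n_D$ on all cycles) to one not containing it. This iterative, equivalence-based criterion is what makes the $S'\subseteq S$ inclusion go through, and it has no counterpart in your proposal.

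Second, cancellativity of $A'$, which you correctly flag as the main obstacle, is exactly the point where you offer only two possible strategies (induction on zig-zag intersections, or a local verification) without carrying either out. The paper's mechanism is different and is the real content of the argument: maximality of the contraction sequence forces every arrow of $Q'$ to lie in a \emph{rigid} perfect matching; Proposition \ref{rigid} shows every rigid perfect matching is simple (by perturbing a non-simple matching along the boundary of the support of a simple submodule to produce an equivalent but distinct matching); and cancellativity then follows from the criterion that a dimer algebra is cancellative iff every arrow lies in a simple matching (Proposition \ref{simple matching}). Without an analogue of Proposition \ref{rigid} --- or an actual proof that your contraction resolves all zig-zag inconsistencies --- the proposal does not establish the theorem.
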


\begin{figure}
$$\begin{array}{ccc}
\xy 0;/r.365pc/:
(-12,6)*+{\text{\scriptsize{$2$}}}="1";(0,6)*+{\text{\scriptsize{$1$}}}="2";(12,6)*+{\text{\scriptsize{$2$}}}="3";
(-12,-6)*+{\text{\scriptsize{$1$}}}="4";(0,-6)*+{\text{\scriptsize{$2$}}}="5";(12,-6)*+{\text{\scriptsize{$1$}}}="6";
(-6,0)*{\cdot}="7";
{\ar_{a}"1";"4"};{\ar^{}"4";"7"};{\ar_{c}"7";"1"};{\ar^{b}"1";"2"};{\ar^{}"2";"7"};{\ar_{b}"5";"6"};{\ar"6";"2"};{\ar^{a}"3";"6"};
{\ar^{\delta}@[green]"7";"5"};{\ar@{..>}"7";"5"};
\endxy
 & \stackrel{\psi}{\longrightarrow} &
\xy 0;/r.365pc/:
(-12,6)*+{\text{\scriptsize{$2$}}}="1";(0,6)*+{\text{\scriptsize{$1$}}}="2";(12,6)*+{\text{\scriptsize{$2$}}}="3";
(-12,-6)*+{\text{\scriptsize{$1$}}}="4";(0,-6)*+{\text{\scriptsize{$2$}}}="5";(12,-6)*+{\text{\scriptsize{$1$}}}="6";
{\ar^{}"1";"4"};{\ar^{}"4";"5"};{\ar^{}"5";"1"};{\ar^{}"1";"2"};{\ar^{}"2";"5"};{\ar^{}"5";"6"};{\ar^{}"6";"2"};{\ar^{}"2";"3"};{\ar^{}"3";"6"};
\endxy
\\
Q & \ \ \ \ & Q' \\
\end{array}$$
\caption{(Example \ref{first ex}.)
A cyclic contraction $\psi: A \to A'$. 
Both quivers are drawn on a torus, and $Q'$ is obtained from $Q$ by contracting the arrow $\delta \in Q_1$.
$\delta$ is the only nonrigid arrow in $Q$.}
\label{example}
\end{figure}

This theorem is important because it implies that the results of \cite{B1,B2,B3,B5,B7}, which assume the existence of cyclic contractions, hold for every nondegenerate dimer algebra.
For example, suppose $A$ admits a cyclic contraction $\psi: A \to A'$; then $A$ is cancellative if and only if $A$ is noetherian, if and only if its center $Z$ is noetherian, if and only if $A$ is a finitely generated $Z$-module \cite[Theorem 1.1]{B3}.
Furthermore, if $A$ is non-cancellative, then $Z$ is nonnoetherian of Krull dimension 3, generically Gorenstein, and contains precisely one closed point $\mathfrak{z}_0$ of positive geometric dimension \cite[Theorem 1.1]{B5}.
In this case, $A$ is locally Morita equivalent to $A'$ away from $\mathfrak{z}_0$, and the Azumaya locus of $A$ coincides with the intersection of the Azumaya locus of $A'$ and the noetherian locus of $Z$ \cite[Theorem 1.1]{B4}.

We emphasize two points regarding the structure of the map $\psi: A \to A'$ and the cycle algebra $S$, assuming $\psi$ is nontrivial.

$\bullet$ Consider the idempotent
$$\epsilon := 1_A - \sum_{\delta \in Q_1^*}e_{\operatorname{h}(\delta)}.$$
Although $\psi$ itself is only a $k$-linear map and not an algebra homomorphism, the restriction to the subalgebra
$$\psi: \epsilon A \epsilon \to A'$$
is an algebra homomorphism.
This restriction becomes an algebra isomorphism under localizations away from $\mathfrak{z}_0$ \cite[Proposition 2.12.1]{B4}.

$\bullet$ The cycle algebra $S$ is isomorphic to the center of $A'$, and is a depiction of the reduced center $Z/\operatorname{nil}Z$ of $A$ \cite[Theorem 1.1]{B5}.
Let
$$\mathbb{S}(A) \subset \operatorname{Rep}_{1^{Q_0}}(A) \ \ \ \text{ and } \ \ \ \mathbb{S}(A') \subset \operatorname{Rep}_{1^{Q'_0}}(A')$$
be the open subvarieties consisting of simple modules over $A$ and $A'$, respectively.
Denote by $\overbar{\mathbb{S}(A)}$ and $\overbar{\mathbb{S}(A')}$ their Zariski closures.
Then $S$ is isomorphic to the $\operatorname{GL}$-invariant rings \cite[Theorem 3.14]{B4}
\begin{equation} \label{S cong}
S \cong k[ \overbar{\mathbb{S}(A)} ]^{\operatorname{GL}} \cong k[ \overbar{\mathbb{S}(A')} ]^{\operatorname{GL}}.
\end{equation}

\begin{Remark} \rm{
In the context of a four-dimensional $\mathcal{N} = 1$ abelian quiver gauge theory with quiver $Q$, the mesonic chiral ring is a commutative algebra generated by all the cycles in $Q$ modulo the superpotential relations $I$.
Theorem \ref{main} then states, loosely, that every low energy non-superconformal dimer theory can be Higgsed to a superconformal dimer theory with the same mesonic chiral ring.
(The mesonic chiral ring is not quite the same as the cycle algebra, however; see \cite[Remark 3.15]{B4}.)
}\end{Remark}

We give a brief outline of our proof of Theorem \ref{main}.
To obtain a cyclic contraction of a dimer algebra $A = kQ/I$, we form a sequence of contractions starting with $Q$, where in each iteration a single arrow is contracted; an example is given in Figure \ref{example2}.
Each contracted arrow $\delta$ has the property that each perfect matching $D$ containing $\delta$ can be `moved off' of $\delta$ and onto a different set of arrows $D'$, in such a way that $D$ and $D'$ are identical perfect matchings from the perspective of the cycles in the quiver.
Since $D$ can be transformed into $D'$ in this way, we call $D$ a `nonrigid perfect matching'.
Furthermore, we say $\delta$ is a `nonrigid arrow', since we may contract $\delta$ without changing the underlying cycle structure of $A$.

We then show that a perfect matching $D$ is nonrigid if and only if it is simple, that is, $Q \setminus D$ supports a simple representation of dimension vector $(1,\ldots,1)$.
Moreover, it was shown in \cite{B3} that if each arrow of a dimer algebra is contained in a simple matching, then it is cancellative (in fact, this is a necessary and sufficient condition for cancellativity).
Therefore, by running the sequence of single arrow contractions until there are only rigid arrows remaining--in particular, until each arrow is contained in a simple matching--we end with a dimer algebra that is cancellative and has the same cycle structure as $A$.

\begin{figure}
$$\begin{array}{ccccc}
\xy 0;/r.365pc/:
(-12,6)*+{\text{\scriptsize{$2$}}}="1";(0,6)*+{\text{\scriptsize{$1$}}}="2";(12,6)*+{\text{\scriptsize{$2$}}}="3";
(-12,-6)*+{\text{\scriptsize{$1$}}}="4";(0,-6)*+{\text{\scriptsize{$2$}}}="5";(12,-6)*+{\text{\scriptsize{$1$}}}="6";
(0,2)*{\cdot}="7";(0,-2)*{\cdot}="8";
{\ar"4";"1"};{\ar"1";"2"};{\ar"3";"2"};{\ar"6";"3"};{\ar"5";"6"};{\ar"5";"4"};
{\ar"2";"7"};{\ar@[blue]"5";"8"};{\ar@[green]"8";"7"};{\ar@{..>}"8";"7"};{\ar@[blue]@/^1.5pc/"7";"5"};{\ar@[green]@/_1.5pc/"7";"5"};{\ar@{..>}@/_1.5pc/"7";"5"};
\endxy
 & \stackrel{\psi_0}{\longrightarrow} &
\xy 0;/r.365pc/:
(-12,6)*+{\text{\scriptsize{$2$}}}="1";(0,6)*+{\text{\scriptsize{$1$}}}="2";(12,6)*+{\text{\scriptsize{$2$}}}="3";
(-12,-6)*+{\text{\scriptsize{$1$}}}="4";(0,-6)*+{\text{\scriptsize{$2$}}}="5";(12,-6)*+{\text{\scriptsize{$1$}}}="6";
(0,2)*{\cdot}="7";
{\ar"4";"1"};{\ar"1";"2"};{\ar"3";"2"};{\ar"6";"3"};{\ar"5";"6"};{\ar"5";"4"};
{\ar"2";"7"};{\ar@[blue]"5";"7"};{\ar@[blue]@/^1.5pc/"7";"5"};{\ar@[green]@/_1.5pc/"7";"5"};{\ar@{..>}@/_1.5pc/"7";"5"};
\endxy
 & \stackrel{\psi_1}{\longrightarrow} &
 \xy 0;/r.365pc/:
(-12,6)*+{\text{\scriptsize{$2$}}}="1";(0,6)*+{\text{\scriptsize{$1$}}}="2";(12,6)*+{\text{\scriptsize{$2$}}}="3";
(-12,-6)*+{\text{\scriptsize{$1$}}}="4";(0,-6)*{\cdot}="5";(0,-7.2)*+{\text{\scriptsize{$2$}}}="";(12,-6)*+{\text{\scriptsize{$1$}}}="6";
(2,-2)*{}="7";(4,-4)*{}="8";
(1.6,-3.6)*{}="9";(2.4,-4.4)*{}="10";
{\ar"4";"1"};{\ar"1";"2"};{\ar"3";"2"};{\ar"6";"3"};{\ar"5";"6"};{\ar"5";"4"};
{\ar"2";"5"};
{\ar@{-}@/^.12pc/@[blue]"5";"7"};{\ar@{-}@/^1.55pc/@[blue]"7";"8"};{\ar@{->}@/^.12pc/@[blue]"8";"5"};
{\ar@{-}@/_.03pc/@[blue]"5";"10"};{\ar@{-}@/_.8pc/@[blue]"10";"9"};{\ar@{->}@/_.03pc/@[blue]"9";"5"};
\endxy
\\
Q & \ \ & Q^1 & \ \ & Q^2 = Q' \\
\end{array}$$
\caption{(Example \ref{Example2}.)
A maximal sequence of contractions as in (\ref{sequence}).
Each quiver is drawn on a torus.
$Q'$ is a cancellative dimer quiver with a length $1$ unit cycle, and the two blue loops are redundant generators for the dimer algebra $A' = kQ'/I'$.}
\label{example2}
\end{figure}

\section{Preliminary definitions} \label{definitions}

Throughout, $k$ is an algebraically closed field.
Given a quiver $Q$, we denote by $kQ$ the path algebra of $Q$, and by $Q_{\ell}$ the paths of length $\ell$.
The idempotent at vertex $i \in Q_0$ is denoted $e_i$, and the head and tail maps are denoted $\operatorname{h},\operatorname{t}: Q_1 \to Q_0$.
Multiplication of paths is read right to left, following the composition of maps.

\begin{Definition} \label{dimer def} \rm{ \
\begin{itemize}
 \item A \textit{dimer quiver} $Q$ is a quiver whose underlying graph $\overbar{Q}$ embeds into a real two-torus $T^2$ such that each connected component of $T^2 \setminus \overbar{Q}$ is simply connected and bounded by an oriented cycle, called a \textit{unit cycle}.\footnote{In more general contexts, the two-torus may be replaced by a compact surface (e.g., \cite{BGH, BKM}).  The dual graph of a dimer quiver is called a dimer model or brane tiling.}
The \textit{dimer algebra} $A$ of $Q$ is the quotient $kQ/I$, where $I$ is the ideal
\begin{equation} \label{I}
I := \left\langle p - q \ | \ \exists a \in Q_1 \ \text{s.t. $pa$ and $qa$ are unit cycles} \right\rangle \subset kQ,
\end{equation}
and $p,q$ are (possibly trivial) paths.
 \item $A$ and $Q$ are \textit{non-cancellative} if there are paths $p,q,r \in A$ for which $p \not = q$, and
 $$pr = qr \not = 0 \ \ \ \text{ or } \ \ \ rp = rq \not = 0;$$
 otherwise $A$ and $Q$ are \textit{cancellative}.
 \item Since $I$ is generated by binomials in the paths of $Q$, we also refer to the equivalence class $p + I$ of a path $p$ in $Q$ as a \textit{path} in $A$.
  If $p$ and $q$ are paths in $Q$ (resp.\ $A$) that are equal modulo $I$, then we will write $p \equiv q$ (resp.\ $p = q$).
\end{itemize}
}\end{Definition}

In the literature, unit cycles are typically required to have length at least $2$ or $3$.
However, we allow unit cycles to have length $1$ since such cycles may form under a cyclic contraction.
An example of a cancellative dimer algebra with a length $1$ unit cycle is given in Example \ref{Example2}.

If $a \in Q_1$ is a unit cycle and $pa$ is the complementary unit cycle containing $a$, then $p$ equals the vertex $e_{\operatorname{t}(a)}$ modulo $I$.
The case where $p$ has length $1$ leads us to introduce the following definition.

\begin{Definition} \rm{
A length $1$ path $a \in Q_1$ is an \textit{arrow} if $a$ is not equal to a vertex modulo $I$; otherwise $a$ is a \textit{pseudo-arrow}.
}\end{Definition}

The following well-known definitions are slightly modified under our distinction between arrows and length $1$ paths.

\begin{Definition} \rm{ \
\begin{itemize}
 \item A \textit{perfect matching} $D$ of $Q$ is a set of arrows such that each unit cycle contains precisely one arrow in $D$.
 \item A perfect matching $D$ is \textit{simple} if there is a cycle in $Q \setminus D$ that passes through each vertex of $Q$ (that is, the subquiver with arrow set $Q_1 \setminus D$ supports a simple $A$-module of dimension $1^{Q_0}$).
\end{itemize}
}\end{Definition}

For each perfect matching $D$, consider the map
$$n_D: Q_{\geq 0} \to \mathbb{Z}_{\geq 0}$$
defined by sending a path $p$ to the number of arrow subpaths of $p$ that are contained in $D$.
Note that if $p,q \in Q_{\geq 0}$ are paths satisfying $\operatorname{t}(p) = \operatorname{h}(q)$, then
$$n_D(pq) = n_D(p) + n_D(q).$$
Furthermore, if $p, p'$ are paths satisfying $p \equiv p'$, then $n_D(p) = n_D(p')$.
In particular, $n_D$ induces a well-defined map on the paths of $A$.

Now consider a contraction of dimer algebras $\psi: A \to A'$, with $A'$ cancellative.
Consider the polynomial ring $k\left[x_D \ | \ D \in \mathcal{S}' \right]$ generated by the simple matchings $\mathcal{S}'$ of $A'$.
To each path $p \in A'$, associate the monomial
$$\bar{\tau}(p) := \prod_{D \in \mathcal{S}'} x_D^{n_D(p)}.$$
The map $\psi$ is called a \textit{cyclic contraction} if
\begin{equation*} \label{cycle algebra}
S := k \left[ \cup_{i \in Q_0} \bar{\tau}\psi(e_iAe_i) \right] = k \left[ \cup_{i \in Q'_0} \bar{\tau}(e_iA'e_i) \right] =: S'.
\end{equation*}
In this case, we call $S$ the \textit{cycle algebra} of $A$ and $A'$, and say $\psi$ `preserves the cycle algebra'.\footnote{The uniqueness of $S$ follows from (\ref{S cong}); see Corollary \ref{last} below.}

\begin{Notation} \rm{
Denote by $\mathcal{P}$ and $\mathcal{P}'$ the set of perfect matchings of $A$ and $A'$, respectively.
For each pair of vertices $i,j \in Q_0$ and $i',j' \in Q'_0$, consider the $k$-linear maps\footnote{The monomial labelings $\bar{\eta}$, $\bar{\eta}'$, and $\bar{\tau}$ define algebra homomorphisms on $A$ and $A'$; see \cite[Section 2.1]{B1}.}
$$\bar{\eta}: e_jAe_i \to k\left[x_D \ | \ D \in \mathcal{P} \right] \ \ \ \text{ and } \ \ \ \bar{\eta}': e_{j'}A'e_{i'} \to k\left[y_D \ | \ D \in \mathcal{P}' \right],$$
defined on paths by
$$\bar{\eta}(p) := \prod_{D \in \mathcal{P}}x_D^{n_D(p)} \ \ \ \text{ and } \ \ \ \bar{\eta}'(q) := \prod_{D \in \mathcal{P}'}y_D^{n_D(q)},$$
and extended $k$-linearly.
Note that for a path $p$, we have $x_D \mid \bar{\eta}(p)$ if and only if $p$ has an arrow subpath that is contained in $D$.

For $i \in Q_0$, denote by $\sigma_i$ either a choice of unit cycle in $e_ikQe_i$, or the unique unit cycle in $e_iAe_i$.
Denote the $\bar{\eta}$, $\bar{\eta}'$, and $\bar{\tau}$ images of the unit cycles in $Q$ and $Q'$ by
$$\sigma_{\mathcal{P}} := \prod_{D \in \mathcal{P}} x_D, \ \ \ \ \ \sigma_{\mathcal{P}'} := \prod_{D \in \mathcal{P}'}y_D, \ \ \ \ \ \sigma_{\mathcal{S}'} := \prod_{D \in \mathcal{S}'}y_D.$$

Let $\pi: \mathbb{R}^2 \to T^2$ be a covering map such that for some vertex $i \in Q_0$, we have $\pi^{-1}(i) = \mathbb{Z}^2$.  
Denote by $Q^+ := \pi^{-1}(Q)$ the infinite covering quiver of $Q$.
For each path $p$ in $Q$, denote by $p^+$ the path in $Q^+$ with tail in $\left[0, 1 \right) \times \left[0, 1 \right) \subset \mathbb{R}^2$ satisfying $\pi(p^+) = p$.
}\end{Notation}

\begin{Notation} \rm{
By a \textit{cyclic subpath}, we mean a cyclic subpath that is not equal to a vertex modulo $I$.

Consider the following sets of cycles in $Q$:
\begin{itemize}
 \item Let $\mathcal{C}$ be the set of cycles in $Q$.\footnote{We caution that in the companion articles \cite{B1} - \cite{B5}, \cite{B7}, the set $\mathcal{C}$ is defined to be the set of cycles in $A$, i.e., cycles in $Q$ modulo $I$.}
 \item For $u \in \mathbb{Z}^2$, let $\mathcal{C}^u$ be the set of cycles $p \in \mathcal{C}$ such that
$$\operatorname{h}(p^+) = \operatorname{t}(p^+) + u \in Q_0^+.$$
 \item For $i \in Q_0$, let $\mathcal{C}_i$ be the set of cycles in the vertex corner ring $e_ikQe_i$.
 \item Let $\hat{\mathcal{C}}$ be the set of cycles $p \in \mathcal{C}$ such that the lift of each 
     cyclic permutation of each representative of $p+I$ does not have a cyclic subpath.
\end{itemize}
We denote the intersection $\hat{\mathcal{C}} \cap \mathcal{C}^u \cap \mathcal{C}_i$, for example, by $\hat{\mathcal{C}}^u_i$.
Note that although the lift of a cycle $p$ in $\hat{\mathcal{C}}$ has no nontrivial cyclic subpaths, $p$ itself may have cyclic subpaths.
We similarly define the set of cycles ${\mathcal{C}'}$ in $Q'$.
}\end{Notation}

\section{Cycles that avoid a perfect matching}

Let $A = kQ/I$ be a dimer algebra.
Throughout this section, set
$$\sigma := \sigma_{\mathcal{P}} \ \ \ \text{ and } \ \ \ \overbar{p} := \bar{\eta}(p).$$
By $\overbar{p} \mid \overbar{q}$, we mean that $\overbar{p}$ divides $\overbar{q}$ in $k[x_D \ | \ D \in \mathcal{P}]$.
We introduce the following.

\begin{Definition} \rm{
We say $A$ and $Q$ are \textit{cycle-nondegenerate} if each cycle of $Q$ contains an arrow that is contained in a perfect matching.
}\end{Definition}

Let $p$ be a cycle.
If $\sigma \nmid \overbar{p}$, then $p \in \hat{\mathcal{C}}$ \cite[Lemma 4.8.3]{B1}; and if $\hat{\mathcal{C}}^u_i \not = \emptyset$ for each $i \in Q_0$ and $u \in \mathbb{Z}^2$, then the converse holds \cite[Proposition 4.20.1]{B1}.
In this section, we show that if $Q$ is cycle-nondegenerate, then for each $u \in \mathbb{Z}^2$ there is a cycle $p \in \hat{\mathcal{C}}^u$ such that $\sigma \nmid \overbar{p}$.

\begin{Lemma} \label{fromBa}
Suppose $p$ and $q$ are paths in $Q$ for which
$$\operatorname{t}(p^+)  = \operatorname{t}(q^+) \ \ \ \text{ and } \ \ \ \operatorname{h}(p^+) = \operatorname{h}(q^+).$$
Then there is an $m,n \geq 0$ such that
\begin{equation*} \label{first}
p \sigma_{\operatorname{t}(p)}^m \equiv q \sigma_{\operatorname{t}(p)}^n \ \ \ \text{ and } \ \ \ \overbar{p} = \overbar{q} \sigma^{n-m}.
\end{equation*}
Furthermore, if $p^+$ is a cycle in $Q^+$, then there is an $m \geq 0$ such that
\begin{equation*} \label{third}
\overbar{p} = \sigma^m.
\end{equation*}
\end{Lemma}

\begin{proof}
The claims hold respectively by \cite[Lemmas 4.3.1, 4.3.2, and 4.8.1]{B1}.
\end{proof}

For $u \in \mathbb{Z}^2$, set
$$\mathcal{C}^{u,n} := \left\{ p \in \mathcal{C}^u \ | \ \sigma^n \mid \overbar{p} \ \text{ and } \ \sigma^{n+1} \nmid \overbar{p} \right\}.$$
Let $\mu = \mu(u) \geq 0$ be the smallest integer for which $\mathcal{C}^{u,\mu} \not = \emptyset$.
The main purpose of this section is to show that for every $u \in \mathbb{Z}^2$, we have $\mu(u) = 0$.

%
\begin{Lemma} \label{partition}
Let $p$ and $q$ be subpaths of cycles in $\mathcal{C}^{u,\mu}$.
If
\begin{equation} \label{tails}
\operatorname{t}(p^+) = \operatorname{t}(q^+) \ \ \ \text{ and } \ \ \ \operatorname{h}(p^+) = \operatorname{h}(q^+),
\end{equation}
then $\overbar{p} = \overbar{q}$.
\end{Lemma}

\begin{proof}
Set $p_1 := p$ and $q_1 := q$, and suppose (\ref{tails}) holds.
Since $p_1, q_1$ are subpaths of cycles in $\mathcal{C}^{u,\mu}$, there are (possibly trivial) paths $p_2,q_2$ such that $p_2p_1$ and $q_2q_1$ are cycles in $\mathcal{C}^{u,\mu}$.
Since $p_2p_1$ is a cycle in $\mathcal{C}^u$, we have
$$\operatorname{t}(p_1^+) + u = \operatorname{h}(p_2^+) = \operatorname{t}(q_1^+) + u = \operatorname{h}(q_2^+), \ \ \ \ \ \ \operatorname{h}(p_1^+) = \operatorname{t}(p_2^+) = \operatorname{h}(q_1^+) = \operatorname{t}(q_2^+).$$

Let $n_1$ and $n_2$ be the respective maximum powers of $\sigma$ that divide $\overbar{q}_1$ and $\overbar{p}_1$.
Assume to the contrary that $n_1 < n_2$.
Set $s := n_2 - n_1 > 0$.
By Lemma \ref{fromBa}, we have
\begin{equation} \label{p1}
\overbar{p}_1 = \overbar{q}_1 \sigma^{n_2 - n_1} = \overbar{q}_1 \sigma^s.
\end{equation}
Since $p_2p_1$ is in $\mathcal{C}^{u,\mu}$, there is a monomial $g \in k[x_D \ | \ D \in \mathcal{P} ]$, not divisible by $\sigma$, such that
$$\overbar{p_2p_1} = g\sigma^{\mu}.$$
Whence
$$\overbar{p_2q_1} = \overbar{p}_2 \, \overbar{q}_1 = \overbar{p}_2 \, \overbar{p}_1 \sigma^{-s} = \overbar{p_2p_1} \sigma^{-s} = g \sigma^{\mu -s}.$$
Thus the maximum power of $\sigma$ that divides $\overbar{p_2q_1}$ is $\mu -s < \mu$.
But the cycle $p_2q_1$ is in $\mathcal{C}^u$, contrary to the minimality of $\mu$.
Therefore $n_1 \geq n_2$.
A similar argument shows that $n_1 \leq n_2$, and so $n_1 = n_2$.
Consequently, $\overbar{p}_1 = \overbar{q}_1$ by (\ref{p1}).
\end{proof}

\begin{Lemma} \label{partition2}
Let $p = p_m \cdots p_2p_1$ be a cycle in $\mathcal{C}^u$ such that each $p_i$ is a subpath of a cycle in $\mathcal{C}^{u,\mu}$.
Then $p$ is in $\mathcal{C}^{u,\mu}$.
\end{Lemma}

\begin{proof}
By induction, it suffices to suppose $p = p_2p_1$.
Since $p_1, p_2$ are subpaths of cycles in $\mathcal{C}^{u,\mu}$, there are paths $q_1,q_2$ such that $q_2p_1$ and $p_2q_1$ are cycles in $\mathcal{C}^{u,\mu}_{\operatorname{t}(p_1)}$.
By Lemma \ref{partition}, we have $\overbar{p}_2 = \overbar{q}_2$ (and $\overbar{p}_1 = \overbar{q}_1$).
Thus
$$\overbar{p_2p_1} = \overbar{p}_2 \, \overbar{p}_1 = \overbar{q}_2 \, \overbar{p}_1 = \overbar{q_2p_1}.$$
Therefore $p_2p_1$ is in $\mathcal{C}^{u,\mu}$.
\end{proof}

\begin{Lemma} \label{yoohooo}
If $Q$ has a perfect matching, then there is containment $\mathcal{C}^{u,\mu} \subseteq \hat{\mathcal{C}}^u$.
\end{Lemma}

\begin{proof}
Suppose $p \in \mathcal{C}^u \setminus \hat{\mathcal{C}}^u$.
Then there is a cyclic permutation $q$ of a representative of $p + I$ such that $q^+$ has a cyclic subpath $r^+$.
Clearly, $\overbar{q} = \overbar{p}$.

By Lemma \ref{fromBa}, $\overbar{r} = \sigma^n$ for some $n \geq 1$.
Furthermore, $\sigma \not = 1$ since the set of perfect matchings $\mathcal{P}$ is nonempty.
But then upon removing $r$ from $q$ we find that $q$, whence $p$, is not in $\mathcal{C}^{u,\mu}$.
\end{proof}

Fix $u \in \mathbb{Z}^2 \setminus 0$.

\begin{Construction} \label{Q} \rm{
Given a path $p$, denote by $[p]$ the subquiver whose vertex and arrow sets are the vertex and arrow subpaths of $p$.
To show that $\mu = 0$, consider a maximal chain of subquivers
\begin{equation} \label{chain2}
Q^0 \subseteq Q^1 \subseteq \cdots \subseteq Q^N \subseteq Q,
\end{equation}
where
$$Q^{0} := \bigcup_{p \in \mathcal{C}^{u,\mu}}[p],$$
and for $1 \leq m \leq N$, there is a particular path $s_m$ such that
$$Q^m := Q^{m-1} \cup [s_m].$$
The sequence of paths $s_1, s_2, \ldots, s_N$ is defined inductively as follows.

Fix $n \geq 1$, and suppose that the sequence $s_1, s_2, \ldots, s_{n-1}$ has been defined.
Consider the set $\mathcal{S}^n$ of paths $s \in Q_{\geq 1}$ for which
\begin{enumerate}[(i)]
 \item the endpoints of $s$ are in $Q^{n-1}_0$, and no other subpath of $s$ lies in $Q^{n-1}$;
 \item if there is a path $p^+$ in $(Q^{n-1})^+$ with endpoints $\operatorname{t}(s^+)$ and $\operatorname{h}(s^+)$, such that the interior of the compact region in $\mathbb{R}^2$ bounded by $p^+$ and $s^+$ contains no arrows in $(Q^{n-1})^+$, then
$$\operatorname{t}(s^+) = \operatorname{t}(p^+) \ \ \ \text{ and } \ \ \ \operatorname{h}(s^+) = \operatorname{h}(p^+).$$
\end{enumerate}
(Condition (ii) says, loosely, that $s^+$ `flows in the same direction' as all the paths in $(Q^{n-1})^+$.)

We now define an order $<$ on $\mathcal{S}^n$.
We say $s \in \mathcal{S}^n$ satisfies ($\star$) if there are paths $p_1,p_2,q_1,q_2 \in Q^{n-1}_{\geq 0}$ such that
$$\operatorname{t}((p_2s_{n-1}p_1)^+) = \operatorname{t}((q_2sq_1)^+), \ \ \ \ \operatorname{h}((p_2s_{n-1}p_1)^+) = \operatorname{h}((q_2sq_1)^+),$$
and for some non-negative integer $\ell \geq 0$,
$$\overbar{p_2s_{n-1}p_1} = \overbar{q_2sq_1} \sigma^{\ell}.$$
Given $s,t \in \mathcal{S}^n$, we declare $s < t$ if
\begin{enumerate}
 \item $s$ satisfies ($\star$) and $t$ does not; or
 \item $s$ satisfies ($\star$) iff $t$ satisfies ($\star$); and $\overbar{s} \mid \overbar{t}$.
\end{enumerate}
Finally, let $s_n$ be a minimal element of $\mathcal{S}^n$ with respect to $<$.
}\end{Construction}

We now use the chain of subquivers (\ref{chain2}) to construct a perfect matching $D$ of $Q$ that avoids the cycles in $\mathcal{C}^{u,\mu}$.

\begin{Construction} \label{D} \rm{
For $n \geq 0$, denote by $D^n$ the set of arrows $a$ in $Q_1 \setminus Q^n_1$ such that
\begin{enumerate}
 \item the tail $\operatorname{t}(a)$ is in $Q^n_0$;
 \item $a$ is not in a unit cycle that contains an arrow in $D^m$ for $0 \leq m < n$; and
 \item $a$ is the unique arrow in the unit cycles containing $a$ that satisfies (1) and (2).
\end{enumerate}
Note that $D^n$ is empty for sufficiently large $n$.
Set
$$D := \bigcup_{n \geq 0} D^{n} \subset Q_1.$$
}\end{Construction}

\begin{figure}
$$\begin{array}{ccccccc}
\xy 0;/r.365pc/:
(0,0)*{\cdot}="1";(-8,-14)*{}="2";
(-8,-6)*{\cdot}="3";(-8,6)*{\cdot}="5";(-8,14)*{}="6";(8,0)*{}="9";
(4,4)*{}="20";(4,-4)*{}="21";
{\ar@[red]"2";"3"};{\ar@[red]"3";"5"};{\ar@[red]"5";"6"};
{\ar_{r_1}"3";"1"};{\ar_{r_2}"1";"5"};
{\ar@{-}@/^.5pc/@[brown]"1";"20"};{\ar@{-}@/^.5pc/@[brown]_{\sigma_i}"20";"9"};{\ar@{-}@/^.5pc/@[brown]"9";"21"};{\ar@/^.5pc/@[brown]"21";"1"};
\endxy
& \ \ \ &
\xy 0;/r.365pc/:
(0,0)*{\cdot}="1";(-8,-14)*{}="2";
(-8,0)*{\cdot}="4";(-8,14)*{}="6";(8,0)*{\cdot}="9";
(8,-14)*{}="7";(8,14)*{}="11";
(0,8)*{}="12";
(4,4)*{}="20";(-4,4)*{}="21";
{\ar@[red]"2";"4"};{\ar@[red]"4";"6"};
{\ar@[red]"7";"9"};{\ar@[red]"9";"11"};
{\ar_{r_1}"4";"1"};{\ar_{r_2}"1";"9"};
{\ar@{-}@[brown]@/_.5pc/"1";"20"};{\ar@{-}@[brown]@/_.5pc/^{\sigma_i}"20";"12"};{\ar@{-}@[brown]@/_.5pc/"12";"21"};{\ar@[brown]@/_.5pc/"21";"1"};
\endxy
& \ \ \ &
\xy 0;/r.365pc/:
(0,0)*{\cdot}="1";(-8,-14)*{}="2";
(-8,-6)*{\cdot}="3";(-8,6)*{\cdot}="5";(-8,14)*{}="6";(8,0)*{\cdot}="9";
{\ar@[red]"2";"3"};{\ar@[red]"3";"5"};{\ar@[red]"5";"6"};
{\ar@[brown]_{b}"1";"3"};{\ar@[brown]_{a}"5";"1"};
{\ar@[brown]@/_1.2pc/_{c}"3";"9"};{\ar@[brown]@/_1.2pc/_d"9";"5"};
\endxy
& \ \ \ &
\xy 0;/r.365pc/:
(-8,-6)*{}="3";(-8,0)*{\cdot}="4";(-8,6)*{}="5";(8,-6)*{}="8";(8,0)*{\cdot}="9";(8,6)*{}="10";
(0,6)*{\cdot}="12";(0,-6)*{\cdot}="14";
(-8,14)*{}="6";(8,14)*{}="11";(-8,-14)*{}="2";(8,-14)*{}="7";
{\ar@[red]@{-}"2";"3"};{\ar@[red]^{}"3";"4"};{\ar@[red]@{-}^{}"4";"5"};
{\ar@[red]"5";"6"};
{\ar@{-}@[red]"7";"8"};{\ar@[red]^{}"8";"9"};{\ar@[red]@{-}^{}"9";"10"};
{\ar@[red]"10";"11"};
{\ar@[brown]^{a}"4";"14"};{\ar@[brown]^{b}"14";"9"};{\ar@[brown]^{c}"9";"12"};{\ar@[brown]^{d}"12";"4"};
\endxy
\\
(2.a) & & (2.a) & & (2.b) & & (2.b)
\end{array}$$
\caption{Cases for Proposition \ref{Clock}.
Each quiver is drawn on the torus, the unit cycle $\sigma_i$ is drawn in brown, and the red arrows are paths in $Q^{n-1}$.
In (2.a), $r$ factors into paths $r = r_2r_1$, with $r_1,r_2 \in Q_{\geq 1}$.
In (2.b), $\sigma_i$ factors into paths $dcba$, with $b,d \in Q_{\geq 0}$.
Irrelevant vertices and arrows are omitted.}
\label{clock}
\end{figure}

\begin{Proposition} \label{Clock}
Suppose $Q$ is cycle-nondegenerate.
Then $D$ is a perfect matching of $Q$.
\end{Proposition}

\begin{proof}
By conditions (2) and (3) in Construction \ref{D}, each unit cycle of $Q$ contains at most one arrow in $D$.
Thus, to show that $D$ is a perfect matching of $Q$, it suffices to show that each unit cycle also contains at least one arrow in $D$.

Let $N \geq 1$ be the minimum integer for which $Q^N = Q^{N+1}$.
Assume to the contrary that there is a unit cycle $\sigma_i$ that does not have an arrow subpath in $D$.
Then one of the following cases holds.
\begin{enumerate}
 \item $\sigma_i$ is a unit cycle of $Q^0$.
 \item There is an $n \geq 1$ such that $\sigma_i$ is a unit cycle of $Q^n$ that is not wholly contained in $Q^{n-1}$, and $\sigma_i$ does not contain precisely one arrow subpath in $Q^n_1 \setminus Q^{n-1}_1$ with tail in $Q_0^{n-1}$.
 \begin{enumerate}
  \item $\sigma_i$ has no arrow subpath $a \in Q^n_1 \setminus Q^{n-1}_1$ for which $\operatorname{t}(a) \in Q^{n-1}_0$.
  \item $\sigma_i$ has at least two arrow subpaths $a,c \in Q^n_1 \setminus Q^{n-1}_1$ with $\operatorname{t}(a),\operatorname{t}(c) \in Q^{n-1}_0$.
 \end{enumerate}
 \item $\sigma_i$ has at least two arrow subpaths $a,c \in Q_1 \setminus Q^N_1$ with $\operatorname{t}(a), \operatorname{t}(c) \in Q^N_0$.
\end{enumerate}

We claim that each case is not possible.

\textit{Case (1):} $Q^0$ cannot contain a unit cycle of $Q$ by Lemmas \ref{partition2} and \ref{yoohooo}.

Recall that $s_n$ is a minimal path in $\mathcal{S}^n$, and $Q^n = Q^{n-1} \cup [s_n]$.
The following two subcases are shown in Figure \ref{clock}.

\textit{Case (2.a):} By assumption, $\sigma_i$ is a unit cycle of $Q^n$ that does not meet $Q^{n-1}$.
Thus $\sigma_i$ may be formed from subpaths of $s_n$.
But then $s^+_n$ has a proper nontrivial cyclic subpath $t^+$.
Let $r$ be the path obtained by omitting $t$ from $s_n$.
Since $t^+$ is a cycle and $s_n \in \mathcal{S}^n$, we have $r \in \mathcal{S}^n$.

Since $r$ is obtained by omitting $t$ from $s_n$, we have $\overbar{r} \mid \overbar{s}_n$.
Thus, if $s_n$ satisfies ($\star$), then $r$ satisfies ($\star$).
Whence $r < s_n$.
Furthermore, since $Q$ is cycle-nondegenerate, $\overbar{t} \not = 1$.
In particular, $\overbar{s}_n \nmid \overbar{r}$.
Therefore $s_n \not < r$.
But this contradicts the minimality of $s_n$ in $\mathcal{S}^n$ with respect to $<$.

\textit{Case (2.b):} Since $a,c$ are arrows in $Q^n_1 \setminus Q^{n-1}_1$, both $a$ and $c$ are subpaths of $s_n$.
By assumption, $\operatorname{t}(a), \operatorname{t}(c) \in Q^{n-1}_0$, and so $s_n$ meets $Q^{n-1}$ at a trivial subpath other than its endpoints.
But then $s_n$ is not in $\mathcal{S}^n$, contrary to our choice of $s_n$.

\textit{Case (3):} Let $\mathcal{A}$ be the set of arrow subaths of $\sigma_i$ in $Q_1 \setminus Q^N_1$.
For $i \in Q_0$, denote by $m(i)$ the minimum integer for which $i \in Q^{m(i)}_0$.
Let $a \in \mathcal{A}$ be such that
$$m(\operatorname{t}(a)) = \operatorname{min}\{ m(\operatorname{t}(c)) \ | \ c \in \mathcal{A} \}.$$

Since $|\mathcal{A}| \geq 2$, each arrow in $\mathcal{A}$ is not in $D$, by condition (3) of Construction \ref{D}.
In particular, $a \not \in D$.
Thus there is an arrow $c \in \mathcal{A} \setminus \{a\}$ and a path $t^+$ with endpoints $\operatorname{t}(a^+)$ and $\operatorname{t}(c^+)$, such that $t$ is a subpath of $s_{m(\operatorname{t}(a))}$.
Let $b,d \in Q_{\geq 0}$ be paths such that $\sigma_i = dcba$.

Without loss of generality, suppose
$$\operatorname{t}(t^+) = \operatorname{t}(a^+) \ \ \ \text{ and } \ \ \ \operatorname{h}(t^+) = \operatorname{t}(c^+).$$
Then by Lemma \ref{fromBa}, there is an $\ell \in \mathbb{Z}$ such that
\begin{equation} \label{doc!}
\overbar{t} = \overbar{ba} \, \sigma^{\ell}.
\end{equation}
Furthermore, $\ell \geq -1$ since $ba$ is a subpath of a unit cycle.

If $\ell = -1$, then $\overbar{dc} = 1$ and $\overbar{t} = 1$, since $dcba$ is a unit cycle.
In particular, the cycle $tdc$ satisfies
$$\overbar{tdc} = \overbar{t} \, \overbar{dc} = 1.$$
But then $Q$ is cycle-degenerate, contrary to assumption.
Thus $\ell \geq 0$.
Therefore (\ref{doc!}) implies that $ba$ is in $Q^N$ since $t$ is in $Q^N$, by property ($\star$).
However, it then follows that $a$ is in $Q^N$, contrary to assumption.

We have shown that each case is not possible.
Therefore each unit cycle contains precisely one arrow in $D$, and so $D$ is a perfect matching of $Q$.
\end{proof}

\begin{Corollary} \label{dude}
Suppose $Q$ is cycle-nondegenerate.
Then for each $u \in \mathbb{Z}^2$, the set of cycles $\mathcal{C}^{u,0}$ is nonempty.
\end{Corollary}

\begin{proof}
If $u = 0 \in \mathbb{Z}^2$, then each vertex of $Q$ is in $\mathcal{C}^{u,0}$.

So let $u \in \mathbb{Z}^2 \setminus 0$, and set $\mu := \mu(u)$.
By Proposition \ref{Clock}, there is a perfect matching $D$ such that each cycle $p$ in $\mathcal{C}^{u,\mu}$ is supported on $Q \setminus D$.
In particular, $x_D \nmid \overbar{p}$.
Whence, $\sigma \nmid \overbar{p}$.
Therefore $\mu = 0$.
\end{proof}

\begin{Example} \label{examplenested} \rm{
We demonstrate the construction of a perfect matching $D$ that avoids the cycles in $\mathcal{C}^{(0,1)}$.
Consider the subquiver $Q^+$ of a dimer quiver shown in Figure \ref{examplenestedfigure}.
\begin{enumerate}[($i$)]
 \item $Q^0$ is formed from the red arrows, and $D^0$ consists of the three brown arrows.
 \item We may choose $s_1$ to be the path formed from the red arrows, and $s_2$ to be the path formed from the blue arrows.
 Then $D^1 = \emptyset$, and $D^2$ consists of the single brown arrow.
 \item Alternatively, we may choose $s_1$ to be the cycle at $i$ formed from the red arrows; $s_2$ to be the purple arrow; and $s_3$ to be the path formed from the blue arrows.
Then $D^1 = D^2 = \emptyset$, and $D^3$ consists of the single brown arrow.
Note that there are other choices for the paths $s_1, s_2, s_3$ as well.
 \item With any choice of paths, the union $D = \cup D^n$ consists of the four brown arrows, and is a perfect matching of $Q$.
\end{enumerate}
}\end{Example}

\begin{figure}
$$\begin{array}{ccccccc}
\xy 0;/r.275pc/:
(0,-16)*{\cdot}="1";(-12,-4)*{}="2";(-12,4)*{}="3";
(0,16)*{\cdot}="4";(12,-4)*{}="5";(12,4)*{}="6";
(-4,0)*{\cdot}="11";(4,0)*{\cdot}="12";
(12,0)*{\cdot}="7";(-12,0)*{\cdot}="8";
(0,4)*{\cdot}="9";(0,-4)*{\cdot}="10";
{\ar@/^.9pc/@{-}@[red]"1";"2"};{\ar@[red]"2";"8"};{\ar@{-}@[red]"8";"3"};{\ar@/^.9pc/@[red]"3";"4"};
{\ar@/_.9pc/@{-}@[red]"1";"5"};{\ar@[red]"5";"7"};{\ar@{-}@[red]"7";"6"};{\ar@/_.9pc/@[red]"6";"4"};
{\ar@/_.4pc/"9";"8"};{\ar@/_.4pc/@[brown]"8";"10"};{\ar@/^.4pc/"9";"7"};{\ar@/^.4pc/@[brown]"7";"10"};
{\ar"11";"9"};{\ar"10";"11"};{\ar"10";"12"};{\ar"12";"9"};
{\ar@[brown]"4";"9"};{\ar"9";"10"};{\ar"10";"1"};
\endxy
& \ \ &
\xy 0;/r.275pc/:
(0,-16)*{\cdot}="1";(-12,-4)*{}="2";(-12,4)*{}="3";
(0,16)*{\cdot}="4";(12,-4)*{}="5";(12,4)*{}="6";
(-4,0)*{\cdot}="11";(4,0)*{\cdot}="12";
(12,0)*{\cdot}="7";(-12,0)*{\cdot}="8";
(0,4)*{\cdot}="9";(0,-4)*{\cdot}="10";
{\ar@/^.9pc/@{-}"1";"2"};{\ar"2";"8"};{\ar@{-}"8";"3"};{\ar@/^.9pc/"3";"4"};
{\ar@/_.9pc/@{-}"1";"5"};{\ar"5";"7"};{\ar@{-}"7";"6"};{\ar@/_.9pc/"6";"4"};
{\ar@/_.4pc/"9";"8"};{\ar@/_.4pc/@[red]"8";"10"};{\ar@/^.4pc/@[red]"9";"7"};{\ar@/^.4pc/"7";"10"};
{\ar@[red]"11";"9"};{\ar@[red]"10";"11"};{\ar@[blue]"10";"12"};{\ar@[blue]"12";"9"};
{\ar"4";"9"};{\ar@[brown]"9";"10"};{\ar"10";"1"};
\endxy
& \ \ &
\xy 0;/r.275pc/:
(0,-16)*{\cdot}="1";(-12,-4)*{}="2";(-12,4)*{}="3";
(0,16)*{\cdot}="4";(12,-4)*{}="5";(12,4)*{}="6";
(-4,0)*{\cdot}="11";(4,0)*{\cdot}="12";
(12,0)*{\cdot}="7";(-12,0)*+{\text{\scriptsize{$i$}}}="8";
(0,4)*{\cdot}="9";(0,-4)*{\cdot}="10";
{\ar@/^.9pc/@{-}"1";"2"};{\ar"2";"8"};{\ar@{-}"8";"3"};{\ar@/^.9pc/"3";"4"};
{\ar@/_.9pc/@{-}"1";"5"};{\ar"5";"7"};{\ar@{-}"7";"6"};{\ar@/_.9pc/"6";"4"};
{\ar@/_.4pc/@[red]"9";"8"};{\ar@/_.4pc/@[red]"8";"10"};{\ar@/^.4pc/@[violet]"9";"7"};{\ar@/^.4pc/"7";"10"};
{\ar@[red]"11";"9"};{\ar@[red]"10";"11"};{\ar@[blue]"10";"12"};{\ar@[blue]"12";"9"};
{\ar"4";"9"};{\ar@[brown]"9";"10"};{\ar"10";"1"};
\endxy
& \ \ &
\xy 0;/r.275pc/:
(0,-16)*{\cdot}="1";(-12,-4)*{}="2";(-12,4)*{}="3";
(0,16)*{\cdot}="4";(12,-4)*{}="5";(12,4)*{}="6";
(-4,0)*{\cdot}="11";(4,0)*{\cdot}="12";
(12,0)*{\cdot}="7";(-12,0)*{\cdot}="8";
(0,4)*{\cdot}="9";(0,-4)*{\cdot}="10";
{\ar@/^.9pc/@{-}"1";"2"};{\ar"2";"8"};{\ar@{-}"8";"3"};{\ar@/^.9pc/"3";"4"};
{\ar@/_.9pc/@{-}"1";"5"};{\ar"5";"7"};{\ar@{-}"7";"6"};{\ar@/_.9pc/"6";"4"};
{\ar@/_.4pc/"9";"8"};{\ar@/_.4pc/@[brown]"8";"10"};{\ar@/^.4pc/"9";"7"};{\ar@/^.4pc/@[brown]"7";"10"};
{\ar"11";"9"};{\ar"10";"11"};{\ar"10";"12"};{\ar"12";"9"};
{\ar@[brown]"4";"9"};{\ar@[brown]"9";"10"};{\ar"10";"1"};
\endxy
\\ \\
(i) & & (ii) & & (iii) & & (iv) \\
\end{array}$$
\caption{(Example \ref{examplenested}.)
A subquiver demonstrating the construction of a perfect matching $D$ that avoids the cycles in $\mathcal{C}^{u,0}$.}
\label{examplenestedfigure}
\end{figure}

\section{Proof of main theorem}

Let $A = kQ/I$ be a dimer algebra.
To prove our main theorem, we introduce the following.

\begin{Definition} \rm{ \
\begin{itemize}
\item We say two perfect matchings $D, D'$ of $Q$ are \textit{equivalent} if for each cycle $p$, we have
$$n_D(p) = n_{D'}(p).$$
 \item A perfect matching is \textit{rigid} if it is not equivalent to another perfect matching.
 \item An arrow $a$ is \textit{nonrigid} if every perfect matching that contains $a$ is equivalent to a perfect matching that does not contain $a$; otherwise $a$ is \textit{rigid}.
\end{itemize}
}\end{Definition}

Note that an arrow is nonrigid if it is not contained in any perfect matching, and it is rigid if it is contained in a rigid perfect matching.
In Propostion \ref{rigid2}, we will show that rigidity characterizes simple matchings.

\begin{Example} \rm{
We give an example of equivalent perfect matchings.
Suppose $D$ is a perfect matching of $Q$ and $Q \setminus D$ has a source at vertex $i$.
Let $\alpha$ and $\beta$ be the set of arrows of $Q$ with head at $i$ and tail at $i$, respectively.
Then $\alpha \subseteq D$.
Whence $\beta \cap D = \emptyset$, since each unit cycle of $Q$ contains precisely one arrow in $D$.
Let $D'$ be the perfect matching obtained from $D$ by replacing the subset $\alpha$ with the set $\beta$.
Then $D$ and $D'$ are equivalent perfect matchings.
}\end{Example}

\begin{Proposition} \label{cycle-nondegenerate}
Suppose $Q$ is cycle-nondegenerate.
Let $Q'$ be the quiver obtained from $Q$ by contracting a single nonrigid arrow $\delta \in Q_1$.
Then
\begin{enumerate}
 \item no cycle of $Q$ is contracted to a vertex; and
 \item $Q'$ is a cycle-nondegenerate dimer quiver.
\end{enumerate}
\end{Proposition}

\begin{proof}
In the following, set $\sigma := \sigma_{\mathcal{P}}$ and $\sigma' := \sigma_{\mathcal{P}'}$.
Denote by $\psi: kQ \to kQ'$ the $k$-linear map defined by contracting $\delta$.
For paths $s \in Q_{\geq 0}$ and $t \in Q'_{\geq 0}$, set
$$\overbar{s} := \bar{\eta}(s) \ \ \ \text{ and } \ \ \ \overbar{t} := \bar{\eta}'(t).$$

(i) We first claim that $Q'$ is a dimer quiver.
It suffices to show that no cycle in $Q$ contracts to a vertex under $\psi$.
Assume to the contrary that there is a (nontrivial) cycle $p$ for which $\psi(p)$ is a vertex.

Since $\delta$ is the only contracted arrow, we have $p = \delta$.
Furthermore, $\delta$ is not equal to a vertex modulo $I$ since it is an arrow, rather than a pseudo-arrow.
Thus $\delta$ is contained in a perfect matching of $Q$, since $Q$ is cycle-nondegenerate.
But $\delta$ is a cycle of length $1$.
Therefore $\delta$ is rigid, contrary to assumption.

(ii) We claim that if $D$ is a perfect matching of $Q$ and $x_D \nmid \overbar{\delta}$ (that is, $\delta$ is not contained in $D$), then $\psi(D)$ is a perfect matching of $Q'$.

Consider a unit cycle $\sigma'_j$ of $Q'$.
Each unit cycle of $Q'$ admits a $\psi$-preimage that is a unit cycle of $Q$.\footnote{We note that a unit 2-cycle may form from contracting $\delta$, and the two arrows it is composed of may be redundant generators of $A'$ \cite[Lemma 4.6]{B1}.
If these arrows are removed, then there would be a unit cycle of $Q'$ that does not have $\psi$-preimage which is a unit cycle of $Q$.}
Thus, there is a unit cycle $\sigma_i$ of $Q$ such that $\psi(\sigma_i) = \sigma'_j$.
Since $D$ is a perfect matching, there is precisely one arrow subpath $a$ of $\sigma_i$ that is in $D$.
Furthermore, since $D$ does not contain $\delta$, we have $a \not = \delta$.
Whence $\psi(a)$ is an arrow in $\sigma'_j$.
Thus there is precisely one arrow subpath of $\sigma'_j$, namely $\psi(a)$, that is in $\psi(D)$.
Therefore $\psi(D)$ is a perfect matching of $Q'$.

(iii) We now claim that $Q'$ is cycle-nondegenerate.
Let $q$ be a cycle in $Q'$.
We want to show that $\overbar{q} \not = 1$.

(iii.a) First suppose that $q$ has a $\psi$-preimage $p$ which is a cycle.
Since $Q$ is cycle-nondegenerate, there is a perfect matching $D \in \mathcal{P}$ for which $x_D \mid \overbar{p}$.

If $x_D \nmid \overbar{\delta}$, then $\psi(D)$ is a perfect matching of $Q'$ by Claim (ii).
Thus $y_{\psi(D)} \mid \overbar{q}$.

So suppose $x_D \mid \overbar{\delta}$.
Since $\delta$ is nonrigid, there is a perfect matching $D' \in \mathcal{P}$ equivalent to $D$ that does not contain $\delta$.
In particular, $x_{D'} \mid \overbar{p}$.
Furthermore, $\psi(D')$ is a perfect matching of $Q'$, again by Claim (ii).
Thus $y_{\psi(D')} \mid \overbar{q}$.

Therefore in either case, $\overbar{q} \not = 1$.

(iii.b) Finally, suppose that $q$ does not have a $\psi$-preimage which is a cycle (in particular, $q$ may not admit any $\psi$-preimage).
Assume to the contrary that $\overbar{q} = 1$.

Since $\psi$ contracts a single arrow, there is a cyclic permutation $q'$ of $q$, and a path $s$ in $Q$, such that $\psi(s) = q'$; see Figure \ref{phew}.
Then
$$\overbar{\psi(s)} = \overbar{q'} = \overbar{q} = 1.$$

Let $t$ be a path for which $\delta t$ is a unit cycle; then $ts$ is a cycle in $Q$.
Let $u \in \mathbb{Z}^2$ be such that $ts \in \mathcal{C}^u$.
By Corollary \ref{dude}, there is a cycle $r$ in $\mathcal{C}^{u,0}$.
By Lemma \ref{fromBa}, there is an $m \in \mathbb{Z}$ such that
\begin{equation} \label{sigmam}
\sigma^m \overbar{r} = \overbar{ts}.
\end{equation}
Since $r \in \mathcal{C}^{u,0}$, we have $\sigma \nmid \overbar{r}$.
Whence $m \geq 0$.
Furthermore,
\begin{equation} \label{neat-o}
\sigma'^m \overbar{\psi(r)} \stackrel{\textsc{(i)}}{=} \overbar{\psi(ts)}
\stackrel{\textsc{(ii)}}{=} \overline{\psi(t)\psi(s)} = \overbar{\psi(t)} \, \overbar{\psi(s)}
\stackrel{\textsc{(iii)}}{=} \overbar{\psi(t)}
\stackrel{\textsc{(iv)}}{=} \sigma',
\end{equation}
where (\textsc{i}) holds by (\ref{sigmam}); (\textsc{ii}) holds since $ts \not = 0$; (\textsc{iii}) holds since $\overbar{\psi(s)} = 1$; and (\textsc{iv}) holds since $\psi(t)$ is a unit cycle.

Now $\overbar{r} \not = 1$ since $r$ is a cycle and $Q$ is cycle-nondegenerate.
Thus $\overbar{\psi(r)} \not = 1$ by Claim (iii.a), with $r$ and $\psi(r)$ in place of $p$ and $q$, respectively.
But then (\ref{neat-o}) implies $m = 0$.
Therefore (\ref{sigmam}) implies
\begin{equation} \label{ts}
\overbar{r} = \overline{ts}.
\end{equation}

Since $r \in \mathcal{C}^{u,0}$, there is a perfect matching $D \in \mathcal{P}$ such that $x_D \nmid \overbar{r}$.
Thus $x_D \nmid \overbar{ts}$ by (\ref{ts}).
Whence $x_D \nmid \overbar{t}$.
Consequently, $x_D \mid \overbar{\delta}$ since $\delta t$ is a unit cycle.
Furthermore, $\delta$ is nonrigid, and so there is a perfect matching $D' \in \mathcal{P}$ equivalent to $D$ such that $x_{D'} \nmid \overbar{\delta}$.
Thus $x_{D'} \mid \overbar{t}$, again since $\delta t$ is a unit cycle.
Hence, $n_{D'}(t) \geq 1$.
Therefore, since $D$ and $D'$ are equivalent, we have
$$0 = n_D(r) = n_D(ts) = n_{D'}(ts) = n_{D'}(t) + n_{D'}(s) \geq 1.$$
But this is not possible, proving our claim.
\end{proof}

\begin{figure}
$$\xy 0;/r.3pc/:
(-5,-10)*{\cdot}="1";(-5,5)*{\cdot}="2";(-5,10)*{\cdot}="3";(5,-10)*{\cdot}="4";(5,10)*{\cdot}="5";
{\ar_s"1";"2"};{\ar@[green]^{\delta}"3";"2"};{\ar@{..>}"3";"2"};{\ar@/^1.5pc/^t"2";"3"};
{\ar_{r}"4";"5"};
\endxy$$
\caption{Setup for Claim (iii.b) in the proof of Proposition \ref{cycle-nondegenerate}, drawn on the cover $Q^+$.
The path $\delta$ is an arrow, $t\delta$ is a unit cycle, and $r$ is a cycle in $\mathcal{C}^{u,0}$.}
\label{phew}
\end{figure}

Let $Q$ be a cycle-nondegenerate dimer quiver.
By Proposition \ref{cycle-nondegenerate}, we may consider a maximal sequence of $k$-linear maps of dimer path algebras
\begin{equation} \label{sequence}
kQ \stackrel{\psi_0}{\longrightarrow} kQ^1 \stackrel{\psi_1}{\longrightarrow} kQ^2 \stackrel{\psi_2}{\longrightarrow} \cdots \stackrel{\psi_m}{\longrightarrow} kQ',
\end{equation}
where $\psi_j$ contracts one nonrigid arrow $\delta_j$ of $Q^j$.
We claim that the composition
$$\psi := \psi_m \cdots \psi_0: kQ \to kQ'$$
induces a cyclic contraction of dimer algebras
$$\psi: A = kQ/I \to A' = kQ'/I'.$$
In particular, $A'$ is cancellative and the cycle algebra is preserved.

\begin{Example} \label{first ex} \rm{
Consider the cyclic contraction $\psi: A \to A'$ given in Figure \ref{example}.
The arrow $\delta$, contracted under $\psi$, is nonrigid since it belongs to the perfect matching $D = \{c, \delta\}$, and $D$ is equivalent to the perfect matching $D' = \{a,b\}$ not containing $\delta$.
It is straightforward to verify that all the other arrows of $A$ belong to rigid perfect matchings.
}\end{Example}

The following example demonstrates why it is necessary to define $\psi$ by a sequence of contractions where only one nonrigid arrow is contracted at a time.

\begin{Example} \label{Example4} \rm{
Consider the two contractions of the non-cancellative dimer quiver $Q$ given in Figure \ref{example4}.
In each case, the contracted quiver $Q'$ is cancellative, and the arrows in $Q$ and $Q'$ are labeled by their respective $\bar{\tau}\psi$- and $\bar{\tau}$-images.
Furthermore, the arrows $a,b \in Q_1$ are both nonrigid.
The cycle algebra is preserved in case (i),
$$S = k\left[ xz,xw,yz,yw \right] = S'.$$
In contrast, the cycle algebra is not preserved in case (ii),
$$S = k\left[x,y,xz,yz \right] \subsetneq k\left[x,y,z\right] = S'.$$
This shows that, in general, the cycle algebra will not be preserved if more than one nonrigid arrow is contracted at a time.
(In both cases, $S$ is isomorphic to the conifold coordinate ring $k\left[ s,t,u,v \right]/(st - uv)$.)
}\end{Example}

\begin{figure}
$$\begin{array}{ccccc}
& Q & & Q' & \\
(i) &
\xy
(-13,6.5)*+{\text{\scriptsize{$1$}}}="1";(0,6.5)*+{\text{\scriptsize{$2$}}}="2";(13,6.5)*+{\text{\scriptsize{$1$}}}="3";
(-13,-6.5)*+{\text{\scriptsize{$2$}}}="4";(0,-6.5)*+{\text{\scriptsize{$1$}}}="5";(13,-6.5)*+{\text{\scriptsize{$2$}}}="6";
(0,0)*{\cdot}="7";
{\ar^{x}"1";"2"};{\ar_{y}"3";"2"};{\ar^{z}"4";"1"};{\ar_{1}^a@[green]"2";"7"};{\ar@{..>}"2";"7"};{\ar_w^b"7";"5"};{\ar_{z}"6";"3"};{\ar^{y}"5";"4"};{\ar_x"5";"6"};
\endxy
& \stackrel{\psi}{\longrightarrow} &
\xy
(-13,6.5)*+{\text{\scriptsize{$1$}}}="1";(0,6.5)*+{\text{\scriptsize{$2$}}}="2";(13,6.5)*+{\text{\scriptsize{$1$}}}="3";
(-13,-6.5)*+{\text{\scriptsize{$2$}}}="4";(0,-6.5)*+{\text{\scriptsize{$1$}}}="5";(13,-6.5)*+{\text{\scriptsize{$2$}}}="6";
{\ar^{x}"1";"2"};{\ar_{y}"3";"2"};{\ar^{z}"4";"1"};{\ar_{w}^b"2";"5"};{\ar_{z}"6";"3"};{\ar^{y}"5";"4"};{\ar_x"5";"6"};
\endxy
& Q_1^* = \left\{ a \right\}
\\
\\
(ii) &
\xy
(-13,6.5)*+{\text{\scriptsize{$1$}}}="1";(0,6.5)*+{\text{\scriptsize{$2$}}}="2";(13,6.5)*+{\text{\scriptsize{$1$}}}="3";
(-13,-6.5)*+{\text{\scriptsize{$2$}}}="4";(0,-6.5)*+{\text{\scriptsize{$1$}}}="5";(13,-6.5)*+{\text{\scriptsize{$2$}}}="6";
(0,0)*{\cdot}="7";
{\ar^{x}"1";"2"};{\ar_{y}"3";"2"};{\ar^{z}"4";"1"};{\ar_{1}^a@[green]"2";"7"};{\ar@{..>}"2";"7"};{\ar_1^b@[green]"7";"5"};{\ar@{..>}"7";"5"};{\ar_{z}"6";"3"};{\ar^{y}"5";"4"};{\ar_x"5";"6"};
\endxy
& \stackrel{\psi}{\longrightarrow} &
\xy
(-6.5,6.5)*+{\text{\scriptsize{$1$}}}="1";(6.5,6.5)*+{\text{\scriptsize{$1$}}}="2";(-6.5,-6.5)*+{\text{\scriptsize{$1$}}}="3";(6.5,-6.5)*+{\text{\scriptsize{$1$}}}="4";
{\ar^z"3";"1"};{\ar_y"2";"1"};{\ar^x"1";"4"};{\ar_z"4";"2"};{\ar^y"4";"3"};
\endxy
& Q_1^* = \left\{ a,b \right\}
\end{array}$$
\caption{(Example \ref{Example4}.)
Two contractions of the non-cancellative dimer quiver $Q$.
Each quiver is drawn on a torus.
In case (i) $\psi$ is cyclic, and in case (ii) $\psi$ is not cyclic since the cycle algebra is not preserved.}
\label{example4}
\end{figure}

\begin{Example} \label{Example2} \rm{
Consider the cyclic contraction $\psi: A \to A'$ defined by the maximal sequence of contractions given in Figure \ref{example2}.
$Q'$ is a cancellative dimer quiver with a length $1$ unit cycle.
Observe that both loops, drawn in blue, are redundant generators for the dimer algebra $A' = kQ'/I'$; however, there is no (well-defined) contraction from $A$ to $A'$ with the loops removed from $Q'$.
}\end{Example}

\begin{Proposition} \label{rigid}
If a perfect matching is rigid, then it is simple.
\end{Proposition}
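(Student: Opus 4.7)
The plan is to prove the contrapositive: if $D$ is not simple, I will construct a perfect matching $D' \neq D$ equivalent to $D$, so $D$ is not rigid. Since $D$ fails to be simple, there exist vertices $u,v \in Q_0$ with no oriented path from $u$ to $v$ in $Q \setminus D$. Let $V \subseteq Q_0$ be the set of vertices reachable from $u$ via oriented paths in $Q \setminus D$, so $u \in V$, $v \notin V$, and by construction $V$ is closed under forward $Q \setminus D$-moves. Define the two sides of the cut
$$\alpha := \{a \in Q_1 : \operatorname{t}(a) \in V,\ \operatorname{h}(a) \in Q_0 \setminus V\}, \qquad \beta := \{a \in Q_1 : \operatorname{t}(a) \in Q_0 \setminus V,\ \operatorname{h}(a) \in V\}.$$

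Closure of $V$ immediately gives $\alpha \subseteq D$. To check $\beta \cap D = \emptyset$, I would suppose $b \in \beta \cap D$ and let $C$ be the unit cycle containing $b$; then $b$ is the unique $D$-arrow of $C$, so the remaining arrows of $C$ lie in $Q \setminus D$ and form an oriented path from $\operatorname{h}(b) \in V$ to $\operatorname{t}(b)$. By closure of $V$, that endpoint must lie in $V$, contradicting $\operatorname{t}(b) \in Q_0 \setminus V$.

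The candidate is $D' := (D \setminus \alpha) \cup \beta$. To see $D'$ is a perfect matching, I would inspect each unit cycle $C$: if $C$ lies entirely on one side of the cut, then $D' \cap C = D \cap C$ is a single arrow; if $C$ straddles the cut, then as a closed walk $C$ crosses the partition $(V, Q_0 \setminus V)$ the same number of times in each direction, and combining this with $\alpha \subseteq D$ and $|D \cap C| = 1$ forces $|\alpha \cap C| = |\beta \cap C| = 1$, so $D' \cap C$ is again a single arrow. For any cycle $p$ in $Q$, the same parity argument applied to $p$ itself yields $n_\alpha(p) = n_\beta(p)$; since $D$ and $D'$ agree outside $\alpha \cup \beta$, this gives $n_D(p) - n_{D'}(p) = n_\alpha(p) - n_\beta(p) = 0$, so $D \sim D'$.

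The remaining, and most delicate, point is to rule out $D' = D$, i.e., to show $\alpha \neq \emptyset$. I expect this to rest on strong connectedness of $Q$: every arrow of a dimer quiver lies on a unit cycle, hence in a directed cycle, so each pair of arrow-adjacent vertices shares a strongly connected component; and connectedness of $\overbar{Q}$, which is forced by the simple-connectedness of the components of $T^2 \setminus \overbar{Q}$, glues these local components into a single one. Strong connectedness of $Q$ then supplies an arrow from $V$ to $Q_0 \setminus V$, so $\alpha \neq \emptyset$ and $D' \neq D$, completing the argument.
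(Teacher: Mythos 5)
Your proof is correct and follows essentially the same route as the paper: pass to the contrapositive, take a forward-closed proper vertex subset of $Q \setminus D$, swap the outgoing cut arrows $\alpha$ (which necessarily lie in $D$) for the incoming ones $\beta$, and verify that $D' = (D \setminus \alpha) \cup \beta$ is a perfect matching equivalent to $D$ by counting cut crossings of unit cycles and of arbitrary cycles. The only (immaterial) difference is that the paper realizes the forward-closed set as the support of a simple submodule of the representation supported on $Q \setminus D$ and deduces $D' \neq D$ from the failure of simplicity directly, whereas you use the reachability closure of a single vertex and strong connectedness of $Q$ to produce an arrow in $\alpha$.
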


\begin{proof}
Let $A$ be a dimer algebra, and let $D$ be a perfect matching of $Q$ which is not simple.
We want to show that $D$ is not rigid.

Let $V$ be an $A$-module of dimension $1^{Q_0}$ with support $Q \setminus D$.
Fix a simple submodule $S$ of $V$.
Denote by $Q^S \subset Q$ the supporting subquiver of $S$, and by $Q \setminus Q^S$ the subquiver of $Q$ obtained by removing the arrows in $Q^S$.

Let $\alpha$ be the set of arrows in $Q \setminus Q^S$ whose tails lie in $Q^S$, and let $\beta$ be the set of arrows in $Q \setminus Q^S$ whose heads lie in $Q^S$.
(The sets $\alpha$ and $\beta$ need not be disjoint.)

(i) We claim that $\alpha \subseteq D$.
Indeed, let $a \in \alpha$.
Then $\operatorname{t}(a) \in Q_0^S$ and $a \not \in Q_1^S$.
Thus, since $S$ is a simple submodule of $V$, we have $aV = 0$.
Whence $a \in D$, proving our claim.

Now consider the set of arrows
\begin{equation} \label{D'}
D' := \left( D \setminus \alpha \right) \cup \beta \subset Q_1.
\end{equation}

(ii) We claim that $D'$ is a perfect matching of $Q$.
Let $[\sigma]$ be a unit cycle subquiver of $Q$.
It suffices to show that $[\sigma]$ contains precisely one arrow in $D'$.

First suppose $[\sigma]$ does not intersect $Q^S$.
Then by (\ref{D'}), the unique arrow in $[\sigma]$ which belongs to $D$ is the unique arrow in $[\sigma]$ which belongs to $D'$.

So suppose $[\sigma]$ intersects $Q^S$ in a (possibly trivial) path; let $p$ be such a path of maximal length.
Then the head of $p$ is the tail of an arrow $a$ in $[\sigma]$ which belongs to $\alpha$.
Whence $a$ belongs to $D$ by Claim (i).
Thus $p$ is unique since $D$ is a perfect matching.

Let $b$ be the arrow in $[\sigma]$ whose head is the tail of $p$.
Then $b$ is in $\beta$.
Thus $b$ belongs to $D'$ by (\ref{D'}).
Furthermore, $b$ is the unique arrow in $[\sigma]$ which belongs to $D'$ since $p$ is unique.

Therefore in either case, $[\sigma]$ contains precisely one arrow in $D'$.

(iii) We claim that $D$ and $D'$ are equivalent perfect matchings.
Let $p$ be a cycle in $A$.
If $p$ is contained in $Q^S$, then
$$n_D(p) = 0 = n_{D'}(p).$$

So suppose $p$ is a cycle in $Q$ that is not wholly contained in $Q^S$.
Then $p$ must contain an arrow in $\beta$ for each instance it enters the subquiver $Q^S$, and must contain an arrow in $\alpha$ for each instance it exits $Q^S$.
Since $p$ is a cycle, the number of times $p$ enters $Q^S$ equals the number of times $p$ exits $Q^S$.
It follows that
$$n_D(p) = n_{D'}(p).$$
Therefore $D$ and $D'$ are equivalent.

(iv) Finally, we claim that $D$ is  not rigid.
By Claim (iii) it suffices to show that $D' \not = D$.
Since $D$ is not simple, we have $S \not = V$.
Whence $\alpha \not = \beta$.
Therefore $D' \not = D$.
\end{proof}

\begin{Proposition} \label{rigid2}
If a perfect matching is simple, then it is rigid.
Consequently, a perfect matching is simple if and only if it is rigid.
\end{Proposition}

\begin{proof}
Let $A$ be a dimer algebra, and let $D$ be a perfect matching of $Q$ which is not rigid.
We want to show that $D$ is not simple.

Assume to the contrary that $D$ is simple.
Then there is a cycle $p$ that contains each arrow in $Q_1 \setminus D$.
Whence, $n_D(p) = 0$.
Furthermore, since $D$ is nonrigid, $D$ is equivalent to a perfect matching $D' \not = D$.
In particular,
$$n_{D'}(p) = n_D(p).$$

Let $a \in D' \setminus D$.
Since $a \not \in D$, $a$ is a subpath of $p$.
Thus, since $a \in D'$, we have $n_{D'}(p) \geq 1$.
But then
$$0 = n_D(p) = n_{D'}(p) \geq 1,$$
a contradiction.
Therefore $D$ is not simple.

The equivalence of simple and rigid then follows from Proposition \ref{rigid}.
\end{proof}

\begin{Proposition} \label{simple matching}
If a dimer algebra is non-cancellative, then it has an arrow that is not contained in any simple matching.
\end{Proposition}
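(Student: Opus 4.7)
For the forward direction (cancellative $\Rightarrow$ each arrow in a simple matching), my plan is to invoke Proposition \ref{rigid} and reduce to exhibiting, for each arrow $a \in Q_1$, a \emph{rigid} matching containing $a$. Suppose to the contrary that $a$ lies in no rigid matching. Then either $a$ lies in no perfect matching at all (the degenerate case), or every perfect matching containing $a$ is equivalent to one avoiding $a$, so $a$ is nonrigid. In the nonrigid case, the equivalence of some matching $D \ni a$ to a matching $D' \not\ni a$ should manifest as two distinct cycles through $\operatorname{h}(a)$ that agree on every $n_{D''}$, one threading through $a$ and the other detouring around it; concatenating with an appropriate third path $r$ then yields $pr = qr \neq 0$ with $p \neq q$, contradicting cancellativity. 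The degenerate case is handled analogously using a cycle through $a$ that every matching annihilates.

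For the backward direction (each arrow in a simple matching $\Rightarrow$ cancellative), I argue by contradiction: suppose there exist paths $p \neq q$ and $r$ with $pr = qr \neq 0$ (the case $rp = rq$ is analogous). Since $n_D$ is well-defined on paths of $A$ and additive on concatenations, applying it yields $n_D(p) = n_D(q)$ for every simple matching $D$. For each such $D$ I would introduce the one-dimensional representation $V_D$ defined by $e_i \mapsto 1$ for every vertex $i$, $a \mapsto 0$ for $a \in D$, and $a \mapsto 1$ for $a \notin D$; this is well-defined because each unit cycle meets $D$ in exactly one arrow, and it is a simple $A$-module of dimension $1^{Q_0}$ since $D$ is simple, by definition of a simple matching. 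The plan is then to exploit the hypothesis that every arrow lies in a simple matching in order to select enough simple matchings $D_1, \ldots, D_k$ whose combined data distinguishes $p$ from $q$, assemble them into a representation separating $p$ and $q$ while keeping the action of $r$ nonzero, and thus derive a contradiction.

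The main obstacle is precisely this final separation step: the $\{0,1\}$-valued modules $V_D$ detect only vanishing of $n_D$, and since $n_D(p) = n_D(q)$ for every simple matching $D$, no single $V_D$ can distinguish $p$ from $q$. Overcoming this will likely require promoting the $V_D$'s to a torus-parameterized family of representations — assigning nonzero scalars $t_D$ to each simple matching and letting an arrow $a$ act as a suitable monomial in the $t_D$'s containing $a$ — or else invoking a universal-cover argument on $T^2$ showing that simple-matching counts determine the homotopy class of a path, and hence its class in $A$ once sufficiently many simple matchings exist.
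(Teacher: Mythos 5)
First, a point of calibration: the paper does not actually prove this proposition --- its entire ``proof'' is a citation to \cite[Theorem 1.1]{B3}, so the statement is imported as a black box from elsewhere. Your proposal instead attempts a proof from scratch, and both halves have genuine gaps. In the forward direction, reducing to ``every arrow lies in a rigid matching'' and invoking Proposition \ref{rigid} is a reasonable strategy, but the step where a nonrigid arrow $a$ is converted into a cancellation failure is asserted, not argued. Saying that the equivalence of $D \ni a$ with $D' \not\ni a$ ``should manifest as two distinct cycles through $\operatorname{h}(a)$ agreeing on every $n_{D''}$'' which then ``concatenate with an appropriate $r$'' to give $pr = qr \neq 0$ with $p \neq q$ skips the two hardest points: producing two elements that are genuinely distinct in $A$ (not merely in $kQ$) from an equivalence of matchings, and producing the equalizing path $r$ with the product nonzero. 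Moreover, the statement you are implicitly proving --- cancellative implies every arrow lies in a \emph{rigid} matching --- is strictly stronger than what is needed and is itself a substantial theorem, resting on the theory of extremal perfect matchings of consistent dimer models; it cannot be waved through.

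In the backward direction you identify the fatal obstacle yourself: if $pr = qr \neq 0$ then $n_D(p) = n_D(q)$ for every simple matching $D$ (by additivity and well-definedness of $n_D$ on $A$), so the $\{0,1\}$-valued modules $V_D$ cannot distinguish $p$ from $q$, and the torus-parameterized family you gesture at only separates paths with different $\bar{\tau}$-images --- which $p$ and $q$ do not have. The missing content is exactly the hard implication: two paths with the same endpoints in the covering quiver $Q^+$ and the same $n_D$ for every simple matching $D$ are equal modulo $I$. That statement \emph{is} the theorem, and no argument for it is offered; ``invoking a universal-cover argument showing that simple-matching counts determine the class in $A$'' is a restatement of the goal, not a proof. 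So the proposal is not a proof. If you want a self-contained argument you will need the machinery of \cite{B3} (or the consistency results of Broomhead and Ishii--Ueda), not a refinement of the representations $V_D$; otherwise the honest course is to do what the paper does and cite the result.
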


\begin{proof}
See \cite[Corollary 3.5]{B3}.
\end{proof}

\begin{Theorem} \label{theorem1}
The dimer algebra $A'$, defined by the sequence (\ref{sequence}), is cancellative.
\end{Theorem}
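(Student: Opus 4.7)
The plan is to invoke Proposition \ref{simple matching}, which reduces the problem to showing that every arrow of $Q'$ is contained in a simple matching. The first step is to observe that every arrow of $Q'$ is rigid: by Lemma \ref{nondegenerate}, $Q'$ is nondegenerate and hence cycle-nondegenerate, so if some $a \in Q'_1$ were nonrigid, Lemma \ref{cycle-nondegenerate} would permit contracting $a$ and extending (\ref{sequence}) by one more step, in violation of maximality.

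The heart of the argument is then the claim that in a cycle-nondegenerate dimer quiver, every rigid arrow lies in a simple matching. Given a rigid $a \in Q'_1$, unfolding the definition produces a perfect matching $D \ni a$ whose entire equivalence class $[D]$ consists of perfect matchings containing $a$. If $D$ itself is rigid (i.e.\ $[D]=\{D\}$), then Proposition \ref{rigid} immediately yields simplicity. Otherwise, I would iterate the exchange construction from the proof of Proposition \ref{rigid}: starting from a non-simple $\widetilde{D} \in [D]$, one obtains $\widetilde{D}' = (\widetilde{D}\setminus\alpha)\cup\beta$ equivalent to $\widetilde{D}$. Since every PM equivalent to $D$ must contain $a$, we necessarily have $a \notin \alpha$, so $a$ persists in $\widetilde{D}'$ and the exchange proceeds over a subquiver not touching $a$.

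The main obstacle is to guarantee that this exchange terminates at a simple PM inside $[D]$. This demands a carefully chosen monovariant which strictly improves with each step -- for instance the cardinality of the vertex set $Q^S_0$ of the chosen simple submodule, or the number of non-terminal strongly connected components of $Q\setminus\widetilde{D}$ -- together with a verification that the exchange indeed improves this measure. Granted this termination, one recovers a simple matching containing $a$, and the conclusion that $A'$ is cancellative follows directly from Proposition \ref{simple matching}.
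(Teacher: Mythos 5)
Your skeleton coincides with the paper's: reduce via Proposition \ref{simple matching} to showing that every arrow of $Q'$ lies in a simple matching, use Lemma \ref{nondegenerate} together with maximality of (\ref{sequence}) to conclude that every arrow of $Q'$ is rigid, and then pass from rigidity to simplicity via Proposition \ref{rigid}. The divergence is in the last step. The paper deduces in a single sentence that each arrow of $Q'$ is contained in a perfect matching that is \emph{rigid}, and then applies Proposition \ref{rigid} exactly once. You correctly observe that the definition of a rigid arrow only supplies a witness $D \ni a$ whose entire equivalence class $[D]$ contains $a$, and that nothing in the definitions forces $D$, or any member of $[D]$, to be a rigid (or simple) perfect matching; so you propose to reach a simple matching inside $[D]$ by iterating the exchange $\widetilde{D} \mapsto (\widetilde{D}\setminus\alpha)\cup\beta$ from the proof of Proposition \ref{rigid}. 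Your observation that $a$ survives every exchange is correct and is immediate from the rigidity of $a$.

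The genuine gap, which you flag yourself, is termination. Since $[D]$ is finite, termination amounts to the iteration not cycling, and the exchange can a priori cycle: if one step adds the coboundary of the indicator function of a vertex set $W$ and a later step is allowed to add the coboundary of the indicator of $Q_0\setminus W$, the two compose to the identity on perfect matchings. Neither of the monovariants you suggest (the cardinality of $Q_0^S$, or the number of non-terminal strongly connected components of $Q\setminus\widetilde{D}$) is verified to change monotonically under the exchange, and neither is obviously monotone; naive candidates such as the total value of a normalized height function also fail, because the normalization can shift after an exchange. Until you either produce a quantity that strictly increases along the iteration, or prove directly that a rigid arrow is contained in a rigid perfect matching, the proof is incomplete at its central point. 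It is worth noting that the step you are stuck on is precisely the content that the paper compresses into the unargued sentence ``each arrow of $Q'$ is contained in a perfect matching that is rigid,'' so you have at least correctly isolated where the real work lies; but isolating it is not the same as doing it.
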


\begin{proof}
Recall that an arrow is nonrigid if it is not contained in any perfect matching; and rigidity is not defined for pseudo-arrows. 
Thus, since the sequence (\ref{sequence}) is maximal, each arrow of $Q'$ is contained in a rigid perfect matching (though $Q'$ may contain pseudo-arrows, and therefore length $1$ paths that do not belong to any perfect matching).
But then each arrow of $Q'$ is contained in a simple matching, by Proposition \ref{rigid}.
Therefore $A'$ is cancellative, by Proposition \ref{simple matching}.
\end{proof}

If $\psi: A \to A'$ is a contraction of dimer algebras and $A'$ has a perfect matching, then $\psi$ does not contract an unoriented cycle of $Q$ to a vertex \cite[Lemma 3.9]{B3}.
In the following, we prove the converse.

\begin{Lemma} \label{induce}
Consider the $k$-linear map of dimer path algebras $\psi: kQ \to kQ'$ defined by contracting a set of arrows in $Q$ to vertices.
If no unoriented cycle in $Q$ is contracted to a vertex, then $\psi$ induces a $k$-linear map of dimer algebras
$$\psi: A = kQ/I \to A' = kQ'/I'.$$
\end{Lemma}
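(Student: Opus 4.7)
The plan is to verify that $\psi(I) \subseteq I'$, which is precisely the condition for $\psi: kQ \to kQ'$ to descend to a well-defined $k$-linear map $A \to A'$. Since $I$ is generated by elements of the form $p - q$ with $pa, qa$ both unit cycles of $Q$ sharing a last arrow $a \in Q_1$, it suffices to show $\psi(p) - \psi(q) \in I'$ for each such generator.

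I would first record a topological preamble: the hypothesis that no unoriented cycle of $Q$ contracts to a vertex is equivalent to saying the set $Q_1^*$ of contracted arrows forms a forest in $\overbar{Q}$. Collapsing a forest inside the CW-decomposition of $T^2$ is a deformation retraction that preserves the remaining cellular structure, and hence induces a bijection between the faces of $Q$ and those of $Q'$; each unit cycle $\sigma$ of $Q$ maps to a positive-length unit cycle $\psi(\sigma)$ of $Q'$ (bounding the corresponding face, with the contracted arrows of $\sigma$ deleted). Now I split into two cases. If $a \notin Q_1^*$, then $\psi(p)\cdot a$ and $\psi(q)\cdot a$ are the two unit cycles of $Q'$ ending with the arrow $a$, and so by the very definition of $I'$ we obtain $\psi(p) - \psi(q) \in I'$. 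If instead $a \in Q_1^*$, then $\psi(p)$ and $\psi(q)$ are themselves nontrivial unit cycles of $Q'$, both based at the single vertex $v$ obtained by identifying $\operatorname{t}(a)$ with $\operatorname{h}(a)$ (together with any other vertices in the same tree component of $Q_1^*$).

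The main obstacle is this second case, which reduces to the auxiliary claim that any two unit cycles at a common vertex of a dimer quiver are equal modulo the dimer ideal. I would prove this by walking cyclically around $v$: for any arrow $b$ of $Q'$ incident to $v$, the defining relation of $I'$ at $b$ equates the two unit cycles of $Q'$ containing $b$; and since consecutive faces around $v$ (in the planar cyclic order of arrows incident to $v$) share such an incident arrow, transitivity yields equality of all unit cycles at $v$ modulo $I'$. Applying this to $\psi(p)$ and $\psi(q)$ then gives $\psi(p) - \psi(q) \in I'$, completing the verification.
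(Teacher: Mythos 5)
Your proposal is correct and follows essentially the same route as the paper: split on whether the shared arrow $a$ is contracted, use the defining relations of $I'$ directly when it is not, and invoke the fact that all unit cycles at a fixed vertex agree modulo $I'$ when it is. The only cosmetic differences are that the paper first factors $\psi$ into single-arrow contractions (where you treat the whole forest at once), and that you supply a proof of the unit-cycles-at-a-vertex fact that the paper simply asserts.
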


\begin{proof}
Factor $\psi: kQ \to kQ'$ into a sequence of $k$-linear maps of dimer path algebras
$$kQ \stackrel{\psi_0}{\longrightarrow} kQ^1 \stackrel{\psi_1}{\longrightarrow} kQ^2 \stackrel{\psi_2}{\longrightarrow} \cdots \stackrel{\psi_m}{\longrightarrow} kQ',$$
where each $\psi_j$ contracts a single arrow of $Q^j$.
To show that $\psi$ induces a $k$-linear map $\psi: A \to A'$, that is, $\psi(I) \subseteq I'$, it suffices to show that for each $0 \leq j \leq m$, we have
$$\psi_j(I_j) \subseteq I_{j+1}.$$
We may therefore assume that $\psi: kQ \to kQ'$ contracts a single arrow $\delta$.

Let $p-q$ be a generator for $I$ given in (\ref{I}); that is, $p,q$ are paths and there is an $a \in Q_1$ such that $pa$ and $qa$ are unit cycles.
We claim that $\psi(p-q)$ is in $\psi(I)$.

If $\delta \not = a$, then $\psi(pa) = \psi(p)\psi(a)$ and $\psi(qa) = \psi(q)\psi(a)$ are unit cycles, and $\psi(a) \in Q'_1$ has length 1.
Thus
\begin{equation} \label{psi p-q}
\psi(p-q) = \psi(p) - \psi(q) \in I'.
\end{equation}

So suppose that $\delta = a$, and no cycle in $Q$ is contracted to a vertex under $\psi$.
Then $\delta$ is not a loop.
Whence, $\psi(p)$ and $\psi(q)$ are unit cycles.
But all unit cycles at a fixed vertex are equal, modulo $I'$.
Therefore (\ref{psi p-q}) holds in this case as well.
\end{proof}

\begin{Lemma} \label{Tea}
If $A$ is cancellative, then $\mathcal{C}^{u,0} = \hat{\mathcal{C}}^u$.
\end{Lemma}

\begin{proof}
We have $\mathcal{C}^{u,0} \subseteq \hat{\mathcal{C}}^u$, by Lemma \ref{yoohooo} (or alternatively, \cite[Lemma 4.8.1]{B1}).
Conversely, since $A$ is cancellative, we have $\mathcal{C}^{u,0} \supseteq \hat{\mathcal{C}}^u$ by \cite[Proposition 4.20.1]{B1}.
\end{proof}

\begin{Proposition} \label{when?}
Let $\psi: kQ \to kQ'$ be the $k$-linear map defined by the sequence (\ref{sequence}).
If $p$ is a cycle in $\mathcal{C}^{u,0}$, then $\psi(p)$ is a cycle in $\mathcal{C}'^{u,0}$.
\end{Proposition}

\begin{proof}
Let $p \in \mathcal{C}^{u,0}$.
Then there is a perfect matching $D_1 \in \mathcal{P}$ such that
\begin{equation} \label{D_1}
x_{D_1} \nmid \bar{\eta}(p).
\end{equation}

Since $\psi: kQ \to kQ'$ is defined by the sequence (\ref{sequence}), where each $\psi_j$ contracts a single nonrigid arrow of $Q^j$, there is a perfect matching $D_2 \in \mathcal{P}$ equivalent to $D_1$ such that $\psi(D_2)$ is a perfect matching of $Q'$.\footnote{The set $\psi(D_1)$, which may consist of both arrows and vertices, may not be a perfect matching of $Q'$.}
Whence
$$n_{D_2}(p) = n_{D_1}(p) \stackrel{\textsc{(i)}}{=} 0,$$
where (\textsc{i}) holds by (\ref{D_1}).
That is, $p$ does not have an arrow subpath that belongs to $D_2$.
Thus $\psi(p)$ does not have an arrow subpath that belongs to $\psi(D_2)$:
$$n_{\psi(D_2)}(\psi(p)) = 0.$$
Consequently, $\sigma_{\mathcal{P}'} \nmid \bar{\eta}' \psi(p)$.
Hence $\psi(p)$ is in $\mathcal{C}'^{v,0}$ for some $v \in \mathbb{Z}^2$.
But $v = u$ since no cycle in $Q$ contracts to a vertex under $\psi$, by Proposition \ref{cycle-nondegenerate}.1.
Therefore $\psi(p)$ is in $\mathcal{C}'^{u,0}$.
\end{proof}

\begin{Theorem} \label{theorem2}
Let $\psi: kQ \to kQ'$ be the $k$-linear map defined by the sequence (\ref{sequence}).
Then $\psi$ induces a contraction of dimer algebras $\psi: A \to A'$, and $S = S'$.
\end{Theorem}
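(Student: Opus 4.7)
The plan is to verify the two assertions in succession.

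\emph{Inducing a contraction.} Since $A$ is nondegenerate, $Q$ is cycle-nondegenerate; by induction using Lemma \ref{cycle-nondegenerate}(2), each intermediate $Q^j$ in (\ref{sequence}) is cycle-nondegenerate. Because $\psi_j$ contracts only the single arrow $\delta_j$, the only potential unoriented cycle of $Q^j$ that could be contracted to a vertex is $\delta_j$ itself, and only if it were a loop. But a loop is an oriented cycle of length one, so by Lemma \ref{cycle-nondegenerate}(1) applied to $Q^j$, $\delta_j$ is not a loop. Thus at every stage the hypothesis of Lemma \ref{induce} is satisfied, so each $\psi_j$ descends to a contraction $A^j \to A^{j+1}$, and composing yields the contraction $\psi : A \to A'$.

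\emph{The equality $S = S'$.} The inclusion $S \subseteq S'$ is immediate, since $\psi$ carries cycles of $A$ at $i$ to cycles of $A'$ at $\psi(i)$, so $\bar\tau\psi(p) = \bar\tau(\psi(p)) \in S'$. For the reverse inclusion I would induct on the length $m$ of (\ref{sequence}) to reduce to the case where $\psi$ contracts a single nonrigid arrow $\delta$. In this case the arrows of $Q'$ correspond bijectively to $Q_1 \setminus \{\delta\}$, and for each simple matching $D$ of $A'$ the preimage $\psi^{-1}(D) \subset Q_1$ is a perfect matching of $A$ (using that the $\psi$-image of each unit cycle of $Q$ is a unit cycle of $Q'$, as contracting a non-loop preserves the face structure on the torus). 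Consequently $n_D(\psi(p)) = n_{\psi^{-1}(D)}(p)$ for every cycle $p$ in $A$, so it suffices to produce, for each cycle $p'$ in $A'$, a cycle $p$ in $A$ with $\psi(p) \equiv p'$ modulo $I'$.

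Such a lift is built by first taking the unique arrow-wise lift of $p'$ into $Q \setminus \{\delta\}$, then gluing consecutive lifts into a path of $Q$ by inserting $\delta$ at each \emph{forward} passage through the identified vertex $\bar v \in Q'_0$ (i.e.\ passages $\operatorname{t}(\delta) \to \operatorname{h}(\delta)$). The principal obstacle is the \emph{backward} passages, where the lift would need to traverse from $\operatorname{h}(\delta)$ to $\operatorname{t}(\delta)$, which a single $\delta$ cannot provide. I expect these to be resolved by combining (i) the nonrigidity of $\delta$, which produces a perfect matching equivalent to, but not containing, $\delta$, and which should translate into a rewriting of $p'$ modulo $I'$ that eliminates backward passages; with (ii) the unit-cycle relations $q_1 \equiv q_2$ in $A$ arising from the two unit cycles $q_i\delta$ through $\delta$, whose $\psi$-images are unit cycles of $A'$ with $\bar\tau$-value the uniform monomial $\omega := \prod_{D \in \mathcal{S}'} x_D$, an element that already lies in $S$ as $\bar\tau\psi(\sigma)$ for any unit cycle $\sigma$ of $A$. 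Verifying that this bookkeeping yields a lifted cycle $p$ satisfying $\bar\tau\psi(p) = \bar\tau(p')$, and that the nonrigidity of $\delta$ is essential in this step, is the main technical hurdle I anticipate.
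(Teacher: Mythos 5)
Your first assertion is handled exactly as in the paper: combine Lemma \ref{cycle-nondegenerate}.1 (no cycle, in particular no loop, is contracted) with Lemma \ref{induce}, and the inclusion $S \subseteq S'$ is likewise the same. The reverse inclusion $S' \subseteq S$ is where your proposal has a genuine gap, and you have located it yourself: the treatment of the backward passages of a cycle of $Q'$ through the contracted vertex is the crux of the whole theorem, and your argument stops at ``I expect these to be resolved by\dots''. This is not routine bookkeeping. Inserting the complementary path of $\delta$ in a unit cycle at each backward passage multiplies the $\bar{\tau}\psi$-image of the lift by $\sigma = \prod_{D \in \mathcal{S}'} x_D$ per passage, so a naive lift of a cycle $q$ with $b$ backward passages only shows $\bar{\tau}(q)\sigma^b \in S$, not $\bar{\tau}(q) \in S$; Example \ref{Example4}(ii) shows this loss is real when the hypotheses fail ($z \in S'$ but only $xz, yz \in S$). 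Moreover, your target --- lifting each individual cycle $p'$ of $A'$ to a cycle $p$ of $A$ with $\bar{\tau}\psi(p) = \bar{\tau}(p')$ --- is stronger than necessary ($S$ is an algebra, so $\bar{\tau}(p')$ need only be a product of images of cycles of $A$, possibly based at different vertices), and it is not clear that this stronger statement is even true. A further caveat: your induction on $m$ is delicate because the intermediate algebras $A^j$ are generally non-cancellative, so ``the cycle algebra is preserved at step $j$'' is not literally an instance of the definition ($\bar{\tau}$ is built from the simple matchings of the final $A'$).

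The paper's route avoids lifting individual cycles. It invokes the structure of the cycle algebra of the cancellative $A'$: $S'$ is generated by $\sigma$ together with monomials not divisible by $\sigma$, and any two cycles of $Q'$ in a fixed homology class $u$ have $\bar{\tau}$-images equal up to a power of $\sigma$. Assuming a monomial $g \in S' \setminus S$ with $\sigma \nmid g$ realized in class $u$, one deduces that every cycle of $Q$ in class $u$ has $\bar{\tau}\psi$-image divisible by $\sigma$, and then the nonrigidity of each contracted arrow is used to transfer this divisibility back to every cycle of $Q'$ in class $u$, contradicting $\sigma \nmid g$. To complete your argument you would essentially have to prove this transfer step, which is precisely where the nonrigidity hypothesis does its work; as written, your proposal does not contain a proof of the second assertion.
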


\begin{proof}
(i) No cycle in $Q$ contracts to a vertex under $\psi$, by Proposition \ref{cycle-nondegenerate}.1.
Therefore $\psi: kQ \to kQ'$ induces a contraction of dimer algebras $\psi: A \to A'$, by Lemma \ref{induce}.

(ii) We claim that $S = S'$.
Set $\sigma := \sigma_{\mathcal{S}'}$.

The inclusion $S \subseteq S'$ holds since the $\psi$-image of a cycle in $Q$ is a cycle in $Q'$.

To show that reverse inclusion, let $g \in S'$.
By Theorem \ref{theorem1}, $A'$ is cancellative.
Thus $S'$ is generated over $k$ by $\sigma$ and a set of monomials in the polynomial ring $k[x_D \ | \ D \in \mathcal{S}']$ that are not divisible by $\sigma$, by \cite[Theorem 5.9, Proposition 5.14]{B1}.
Furthermore, $\sigma$ is in $S$ since $\sigma$ is the $\bar{\tau}\psi$-image of each unit cycle in $Q$.
Therefore it suffices to assume that $g$ is a monomial that is not divisible by $\sigma$.

Since $g$ is a monomial in $S'$, there is some $u \in \mathbb{Z}^2$ such that $g$ is the $\bar{\tau}$-image of a cycle $q$ in ${\mathcal{C}'}^u$.
Furthermore, since $\sigma \nmid g$, we have $q \in \hat{\mathcal{C}}'^u$ by Lemma \ref{Tea}. 

Now there exists a cycle $p$ in $\mathcal{C}^{u,0}$, by Corollary \ref{dude}.
Whence $\psi(p)$ is in $\mathcal{C}'^{u,0}$, by Proposition \ref{when?}.
Thus, since $A'$ is cancellative, $\psi(p)$ is in $\hat{\mathcal{C}}'^u$, by Lemma \ref{Tea}.
Furthermore, since $A'$ is cancellative, any two cycles in $\hat{\mathcal{C}}'^u$ have the same $\bar{\tau}$-image, by \cite[Proposition 4.20.2]{B1}.
Hence
$$g = \bar{\tau}(q) = \bar{\tau}\psi(p) \in S.$$
Therefore $S' \subseteq S$, and so $S = S'$.
\end{proof}

Theorems \ref{theorem1} and \ref{theorem2} together imply that every cycle-nondegenerate, hence nondegenerate, dimer algebra admits a cyclic contraction.

\begin{Example} \rm{
A dimer algebra for which Theorem \ref{main} \textit{does not} apply is given in Figure \ref{example3}.
Its quiver contains no perfect matchings, and is thus degenerate.
}\end{Example}

\begin{figure}
$$\xy 0;/r.365pc/:
(-6,-6)*+{\text{\scriptsize{$1$}}}="1";(6,-6)*+{\text{\scriptsize{$1$}}}="2";(6,6)*+{\text{\scriptsize{$1$}}}="3";(-6,6)*+{\text{\scriptsize{$1$}}}="4";
{\ar"1";"2"};{\ar"2";"3"};{\ar"4";"3"};{\ar"1";"4"};
{\ar@/^.9pc/"4";"1"};{\ar@/^.9pc/"3";"4"};
\endxy$$
\caption{The quiver of a degenerate dimer algebra, drawn on a torus.}
\label{example3}
\end{figure}

The following corollary allows us to refer to \textit{the} cycle of algebra of a dimer algebra.

\begin{Corollary} \label{last}
Every cycle-nondegenerate, hence nondegenerate, dimer algebra $A$ has a cycle algebra $S$, and $S$ is independent of the choice of cyclic contraction, up to isomorphism.
\end{Corollary}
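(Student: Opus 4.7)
The plan is to split the corollary into existence and uniqueness and dispatch each in turn. Existence is immediate: by Theorem \ref{main}, every nondegenerate dimer algebra $A$ admits at least one cyclic contraction $\psi : A \to A'$, and the ring $S = k\left[\cup_{i \in Q_0} \bar{\tau}\psi(e_iAe_i)\right]$ defined in Section \ref{definitions} is then a cycle algebra of $A$ by construction.

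For uniqueness up to isomorphism, the key is to invoke the intrinsic description recorded in (\ref{S cong}) and attributed to \cite[Theorem 3.14]{B5}. That result says that for any cyclic contraction $\psi : A \to A'$, the resulting cycle algebra satisfies
$$S \;\cong\; k[\overbar{\mathbb{S}(A)}]^{\operatorname{GL}}.$$
The right-hand side is built entirely from intrinsic data of $A$: the simple locus $\mathbb{S}(A) \subset \operatorname{Rep}_{1^{Q_0}}(A)$ together with the natural $\operatorname{GL}$-action. In particular, it does not reference the contracted quiver $Q'$ or the map $\psi$ at all. So if $\psi_j : A \to A_j'$ are two cyclic contractions of $A$ with cycle algebras $S_j$ for $j = 1, 2$, then applying (\ref{S cong}) to each and composing yields
$$S_1 \;\cong\; k[\overbar{\mathbb{S}(A)}]^{\operatorname{GL}} \;\cong\; S_2,$$
which is precisely the asserted independence.

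The only substantive point to verify is that (\ref{S cong}) applies uniformly to \emph{every} cyclic contraction of $A$, rather than to a single distinguished one fixed in advance; this is precisely the reading already signaled by the footnote following the definition of the cycle algebra, so I do not expect any genuine obstacle. Modulo that check, the corollary follows by pure transitivity of isomorphism, without any further combinatorial or representation-theoretic input.
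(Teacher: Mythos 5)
Your proposal is correct and follows essentially the same route as the paper: existence via Theorem \ref{main}, and uniqueness via the intrinsic description $S \cong k[\overbar{\mathbb{S}(A)}]^{\operatorname{GL}}$ from (\ref{S cong}), whose right-hand side depends only on $A$. The caveat you flag (that (\ref{S cong}) holds for every cyclic contraction, not just a distinguished one) is exactly the reading the paper takes for granted.
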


\begin{proof}
Let $A$ be cycle-nondegenerate.
We have shown that $A$ admits a cyclic contraction $\psi: A \to A'$, and thus $A$ has a cycle algebra $S$.
Recall the isomorphism in (\ref{S cong}),
$$S \cong k[ \overbar{\mathbb{S}(A)} ]^{\operatorname{GL}}.$$
Since the right-hand side is independent of $A'$, $S$ does not depend on $\psi$.
\end{proof}

It was shown in \cite[Theorem 1.3]{BIU} that if $Q$ is a nondegenerate dimer quiver, then a set of its arrows may be contracted to produce a cancellative dimer quiver $Q'$ with the same characteristic polygon.
In future work, we hope to determine how this theorem is related to Theorem \ref{main}.

\ \\
\textbf{Acknowledgments.}  The author would like to thank Akira Ishii, Kazushi Ueda, and Ana Garcia Elsener for useful discussions, as well as an anonymous referee for comments that have helped improve the article.
The author was supported by the Austrian Science Fund (FWF) grant P 30549-N26.

\bibliographystyle{hep}
\def\cprime{$'$} \def\cprime{$'$}

\end{document}